\begin{document}

\title{On Minimum Area Homotopies of Normal Curves in the Plane}
\author[1]{Brittany Terese Fasy}
\author[2]{Sel\c{c}uk Karako\c{c}}
\author[3]{Carola Wenk}
\affil[1]{Gianforte School of Computing, Montana State University\\
  Bozeman, MT, USA\\
  \texttt{brittany@fasy.us}}
\affil[2]{Department of Mathematics, Tulane University\\
  New Orleans, LA, USA\\
  \texttt{skarakoc@tulane.edu}}
\affil[3]{Department of Computer Science, Tulane University\\
  New Orleans, LA, USA\\
  \texttt{cwenk@tulane.edu}}
  
\authorrunning{B.~T.~Fasy, S.~Karako\c{c}, and C.~Wenk} 
\Copyright{Brittany Terese Fasy, Sel\c{c}uk Karako\c{c}, and 
Carola Wenk}

\subjclass{F.2.2 [Nonnumerical Algorithms and Problems] Geometrical
problems and computations}
\keywords{Homotopy, curves, minimum homotopy area, self-overlapping curves}

\serieslogo{}
\EventShortName{}
\DOI{10.4230/LIPIcs.xxx.yyy.p}

\maketitle
\begin{abstract}
    In this paper, we
study the problem of computing a homotopy from a planar curve $C$ to a point that minimizes
the area swept.  The existence of such a {\em minimum homotopy} is a direct
result of the solution of Plateau's problem. Chambers and Wang studied the
special case that~$C$ is the concatenation of two simple curves, and they gave a
polynomial-time algorithm for computing a minimum homotopy in this setting. We
study the general case of a normal curve $C$ in the plane, and provide
structural properties of minimum homotopies that lead to an algorithm. In
particular, we prove that for any normal curve there exists a minimum homotopy that consists entirely
of contractions of self-overlapping sub-curves (i.e., consists of contracting a collection
of boundaries of immersed disks).

\end{abstract}



\section{Introduction}

The theory of minimal surfaces has been extensively studied by many mathematicians
and the existence of such surfaces with a given boundary, known as Plateau's
problem, has been proven by Rado and
Douglas~\cite{DG31,lagrange,HL80,plateau,RD30}. In this
work, we address the related problem of computing a minimum homotopy that
minimizes the homotopy area of a normal curve in the plane. Chambers and
Wang \cite{CW13} have defined the notion of minimum homotopy area to measure the
similarity between two simple curves that share the same start and end points.
Many continuous deformations, i.e., homotopies, between the two curves exist,
but a minimum-area homotopy is a deformation that minimizes the total area
swept. Chambers and Wang provided a dynamic programming algorithm to compute
such a minimum homotopy in polynomial time. 

Here, we study the more general task of computing the minimum homotopy area of
an arbitrary closed curve being contracted to a point; see \figref{Selfie} for
an example of such a minimum homotopy. This generalizes the Chambers and Wang
setting. One application would be to measure the similarity of two non-simple
curves (where we create a closed loop by concatenating the two curves).

Any normal homotopy can be described in terms of the combinatorial changes it incurs on the curve,
and can thus be characterized by a sequence of \emph{homotopy moves} \cite{JE13} which are projections of the
well-known Reidemeister moves for knots \cite{AB27}. 
In this paper, we provide structural insights for minimum-area homotopies. One of the
key ingredients is the use of self-overlapping curves \cite{BL67, EM09, MX74,SVW92,
TT61}. 
These curves are the boundaries of immersed disks and they have a natural interior.
An algorithm with a polynomial runtime has been given in~\cite{SVW92} to
detect whether a given normal curve is self-overlapping or not and to find the
interior of the curve in case it is self-overlapping. 
We show that the minimum homotopy area for a self-overlapping curve is equal to its winding area, the integral of the
winding numbers over the plane.

\begin{figure}[h] 
            \centering
	    \includegraphics{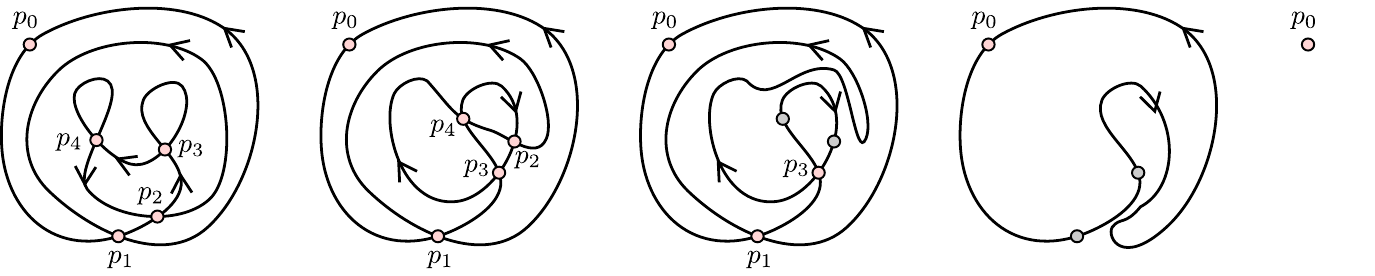} 
            \caption{A minimum homotopy is given as a sequence of homotopy 
                moves. The initial curve is self-overlapping. 
            }
    \label{fig:Selfie} 
\end{figure}

For a general normal curve, we show that a
minimum homotopy can be obtained by contracting a sequence of self-overlapping
subcurves that are based at
intersection points of the curve.  
This structural theorem reduces the space of homotopies to a finite candidate~set. 

In a preprint \cite{NIE14}, Nie provides an abstract algebraic construction for
computing the minimum homotopy. He reduces the problem to computing the weighted
cancellation distance on elements of the fundamental group induced by the planar
embedding, and this distance can be computed in polynomial-time using dynamic
programming. However, 
our approach is quite different and geometric in nature.

Our results not only solve the problem but also relate minimum homotopy to an interesting class of curves.
%

\section{Preliminaries}


In this section, we introduce the concepts of normal curves and homotopy moves, which we use throughout the paper.

\subsection{Normal Curves}

A \emph{closed curve} is a continuous map $C:[0,1] \rightarrow \R^2$ with $C(0)=C(1)$.
Let $[C]$ denote the image of this map. We call a closed curve \emph{(piecewise)
regular} if it is (piecewise) differentiable and its left and right derivatives
never vanish. Note that any regular curve $C$ is an immersion of the unit circle
$\S$ into $\R^2$. For a piecewise regular curve $C$, we call a point $p \in [C]$ 
an {\em intersection point} if $C^{-1}(p)$ consists of more
than a single point.  Without loss of generality, we assume that $C(t) \neq
C(0)$ for any $t \in (0,1)$.

An intersection point $p\in [C]$ is called a \emph{simple crossing point} if
there exist $t_1, t_2 \in [0,1]$, with $t_1\neq t_2$, such that
$C^{-1}(p)=\{t_1,t_2\}$ and if the tangent vectors at $t_1$ and $t_2$ exist and
are linearly independent.  In other words, a crossing point is simple if the
intersection is transverse.
A piecewise regular curve $C$ is called \emph{normal}
if it contains only a finite number of intersection points and these are all
simple crossing points. For a normal curve $C$, we define the complexity of $C$
as the number of simple crossing points. We set $P_C=\{p_0,p_1,\ldots,p_n\}$
where $p_0=C(0)$ and $p_i$ is an intersection point of $C$ for $i>0$.

Each normal curve $C$ naturally corresponds to a planar embedded directed graph
$G=(V,E)$. The vertex set $V=\{0, 1, \ldots, n\}$ represents the set of simple
crossing points $P_C=\{p_0, p_1, \ldots, p_n\}$, including the base point~$p_0$.
A directed edge $(i,j)\in E$ represents a direct connection along the curve from
$p_i$ to~$p_j$. 
We  call two  normal curves~$C_1$ and $C_2$
\emph{combinatorially equivalent} if their induced planar embedded graphs
\mbox{are isomorphic.}

Note that each face of this planar embedded graph corresponds to a maximal
connected component of~$\R^2 \setminus [C]$ whose boundary consists of a union
of edges of the graph.  Let $C$ be a normal curve and let $f_0, f_1, \ldots,
f_k$ be the faces of the induced graph defined by the image of $C$. For each $x
\in \R^2 \setminus [C]$, the \emph{winding number} of~$C$ at $x$, which we
denote as $wn(x,C)$, is defined as the signed number of times that the curve
`wraps around' $x$ \cite{CW13,JE13}. Notice that the winding number is constant
on each face $f$.  Thus, the winding area of a face $wn(f,C)$ is well-defined.
For
all $x \in [C]$, we define the winding area to be zero.

For a point $p_0\in\R^2$, let $\mathfrak{C}_{p_0}$ denote the set of all normal
curves with start point $p_0$, including the constant curve at $p_0$.
In the following, we only consider normal curves. Such an
assumption is justified, as Whitney proved that any regular curve can be
approximated with a normal curve that is obtained from an arbitrarily small
deformation~\cite{WH37}. 

The \emph{Whitney index} $\whit{C}$ of a regular normal curve $C$ is defined to
be the winding number of the derivative $C'$ about the origin. Note that, by
definition of a regular curve, the derivative $C'$ also defines a closed curve
and $(0,0) \not \in [C']$. The well-known Whitney-Graustein theorem \cite{WH37}
states that two regular curves are regularly homotopic if and only if they have
the same Whitney index.  For a piecewise regular closed curve $C$, we set
$\whit{C}=\whit{{\widetilde{C}}}$, where $\widetilde{C}$ is a regular curve
approximating~$C$, obtained by  smoothing the corners, i.e., the non-differentiable
points, of $C$ in an arbitrarily small deformation.

\subsection{Homotopies and Homotopy Moves}
Let $p_0\in\R^2$. A \emph{homotopy} between two curves $C_1, C_2\in
\mathfrak{C}_{p_0}$ is a continuous map  $H: [0,1]^2 \rightarrow \R^2$ such that
$H(0,t) =C_1(t)$, $H(1,t)=C_2(t)$, and $H(s,0)=p_0=H(s,1)$ for all $(s,t) \in
[0,1]^2$. A homotopy $H$ between~$C_1$ and $C_2$ is denoted as  $C_1
\homotopyarrow{H}  C_2$. Since $\R^2$ is simply connected, any two curves in
$\mathfrak{C}_{p_0}$ are homotopic. In particular, any curve
$C\in\mathfrak{C}_{p_0}$ is homotopic to the constant curve $p_0$. 

We concatenate two homotopies $C_1\homotopyarrow{H_1} C_2$ and
$C_2\homotopyarrow{H_2}  C_3$, denoted $H_1 + H_2 =: H$,  where the new homotopy 
is given as $H(s,t)=H_1(2s,t)$ for $t\in [0,\frac{1}{2}]$ and
$H(s,t)=H_2(2s-1)$  for $t\in [\frac{1}{2},1]$. Notice that $H$ is a homotopy
from $C_1$ to $C_3$.  Similarly, for a sequence of homotopies $\{C_i
\homotopyarrow{H_i}  C_{i+1}\}_{i=1}^k$, we denote their concatenation $C_1
\homotopyarrow{H}  C_{k+1}$  with $H= \sum _{i=1}^k H_i$.

Let $C \homotopyarrow{H} p_0$ be a homotopy. 
Consider an intermediate curve $\widetilde{C}$ of the homotopy.  For
each $p \in [\widetilde{C}]$, if $p$ is not an intersection point, then~$p$ 
neighbors two
faces of $\R^2 \setminus [\widetilde{C}]$.  We can use the orientation of the 
curve to define one
face to be the \emph{left face} and the other to be the \emph{right face}. 
We call $H$  {\em left sense-preserving} if for
any non-intersection point $p=H(s,t) \in [H]$, the point $H(s+\epsilon,t)$ lies
on or to the left of the oriented curve $H(s,\cdot)$ for each $s$ and $t$.
Similarly,~$H$ is {\em right sense-preserving} if $H(s+\epsilon,t)$ always lies 
on or
to the right of $H(s,\cdot)$.

As we deform normal curves using homotopies, we necessarily encounter non-normal
curves.  In order to stay within a nice family of curves, we define a curve to
be \emph{almost normal} if it has a finite number of intersection points, which 
are
either simple crossing points, triple points, or non-transverse (tangential)
crossing points.  We call a homotopy~$H$ from $C_1$ to $C_2$ a \emph{normal
homotopy} if each
intermediate curve is (piecewise) regular, and either normal or almost normal,
with only a finite set of them being almost normal.

Any normal homotopy $C \homotopyarrow{H}  p_0$ can be captured by
a sequence of homotopy moves which are similar to Reidemeister moves for knots.
There are three types of such moves: the \Ia- and \Ib-moves destroy/contract 
and create
self-loops; the \IIa- and \IIb-moves destroy and create regions defined by a
double-edge of the corresponding graph; and the \III-moves invert a triangle.
See \figref{SimpleTitusMoves}. Without loss of generality, we assume that at each event time point
there is only a single homotopy move. Any piecewise differentiable homotopy can be approximated by a normal 
homotopy \cite{FR71}.

\begin{figure}[htbp]
    \centering
    \subfloat[I-move]{\label{subfig:I-move} \includegraphics{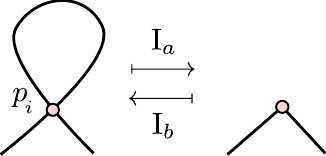}}
    \hspace*{7ex}
    \subfloat[\II-move] {\label{subfig:II-move} \includegraphics{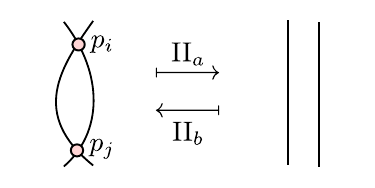}}
    \hspace*{7ex}
    \subfloat[\III-move] {\label{subfig:III-move} \includegraphics{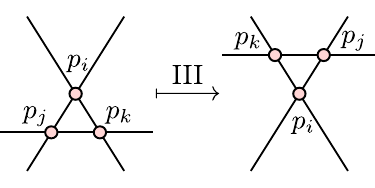}}
    
    \caption{Three types of homotopy moves are shown. An $a$-move destroys a
    monogon or bigon, a $b$-move creates a monogon or bigon, and a \III-move
inverts a triangle. 
}
   
    \label{fig:SimpleTitusMoves}
\end{figure}

\section{Minimum Homotopy Area}

In this section, we define minimum homotopy area and give basic properties of minimum homotopies.
We introduce self-overlapping curves and decompositions of~curves.

\subsection{Definition of Minimum Homotopy Area}
Let $C_1,C_2 \in \mathfrak{C}_{p_0}$ be two curves and $C_1 
\homotopyarrow{H} C_2$
be a homotopy. Let $E_H: \R^2 \rightarrow \Z$ be the function that assigns to
each $x \in \R^2$ the number of connected components of $H^{-1}(x)$. In other
words, $E_H$ counts how many times the intermediate curves $H(s)$ sweep over
$x$. The homotopy area $\textsc{Area}(H)$ of $H$ is defined as the integral 
of~$E_H$ over
the plane: 
$$\textsc{Area}(H)=\int_{\R^2} E_H(x) \text{ } dx.$$ 
Since addition distributes over the integral and since 
$E_{H_1+H_2}(x)=E_{H_1}(x)+E_{H_2}(x)$, the area is additive:
$\textsc{Area}(H_1+H_2)=\textsc{Area}(H_1)+\textsc{Area}(H_2)$.   

We define the minimum homotopy area between $C_1$ and $C_2$, denoted as
$\sigma(C_1,C_2)$, as the infimum homotopy area over all piecewise
differentiable homotopies between $C_1$ and $C_2$: 
$$\sigma(C_1,C_2)=\inf_H \textsc{Area}(H).$$ 
In this paper, we are interested in the special case where $C_2$ is the constant
curve.  Hence, we define the minimal homotopy area of a single curve~$C \in 
\mathfrak{C}_{p_0}$
to denote the minimal nullhomotopy of the curve~$C$, hence we write 
$\sigma(C):=\sigma(C,p_0)$.
    We note here that $\sigma(C)$ is well-defined,
    since $\sigma(C,p) = \sigma(C,p_0)$ for all $p \in [C]$.


A \emph{minimum homotopy} $H$ is a homotopy that realizes the above infimum. The
existence of minimum homotopies is a  result of Douglas' work on the solution of
Plateau's problem; see~\cite[Theorem $7$]{HL80}.

\begin{lemma}[Splitting a Minimal Homotopy]\label{lem:splitting}

    Let $C_1 \homotopyarrow{H_1} C_2$ and $C_2 \homotopyarrow{H_2}  C_3$ 
be two
    homotopies and $H=H_1+H_2$. If $C_1 \homotopyarrow{H}  C_3$ is a minimum
    homotopy, then we have: 

    \begin{enumerate}
        \item The sub-homotopies $H_1$ and $H_2$ are also
            minimum.\label{lemlist-subhomotopy}
        \item
            
$\sigma(C_1,C_3)=\sigma(C_1,C_2)+\sigma(C_2,C_3)$.\label{lemlist-sigmasplit}
        \item If $C_2 \homotopyarrow{H_2'} C_3$ is another minimum homotopy, so 
is
            $C_1 \homotopyarrow{H'} C_3$ where
            $H'=H_1+H'_2$.\label{lemlist-switch}
    \end{enumerate}
\end{lemma}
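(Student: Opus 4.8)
The plan is to deduce everything from two facts already available: the additivity of homotopy area, $\textsc{Area}(H_1+H_2)=\textsc{Area}(H_1)+\textsc{Area}(H_2)$, and the fact that $\sigma$ is an infimum over homotopies, hence subadditive: $\sigma(C_1,C_3)\le \sigma(C_1,C_2)+\sigma(C_2,C_3)$ (concatenate near-optimal homotopies for each piece). I would prove part~\ref{lemlist-sigmasplit} first, then derive part~\ref{lemlist-subhomotopy} from it, and finally part~\ref{lemlist-switch} as an easy consequence.

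\begin{proof}[Proof plan]
First I would establish the inequality $\sigma(C_1,C_3)\ge \sigma(C_1,C_2)+\sigma(C_2,C_3)$. By hypothesis $H=H_1+H_2$ is minimum, so $\sigma(C_1,C_3)=\textsc{Area}(H)=\textsc{Area}(H_1)+\textsc{Area}(H_2)$ by additivity. Since $H_1$ is \emph{some} homotopy from $C_1$ to $C_2$ we have $\textsc{Area}(H_1)\ge \sigma(C_1,C_2)$, and likewise $\textsc{Area}(H_2)\ge \sigma(C_2,C_3)$; adding gives the desired inequality. Combining with the subadditivity inequality of the previous paragraph yields equality, proving part~\ref{lemlist-sigmasplit}.

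Next, for part~\ref{lemlist-subhomotopy}, I would use the equality just proved together with the same two inequalities $\textsc{Area}(H_1)\ge\sigma(C_1,C_2)$ and $\textsc{Area}(H_2)\ge\sigma(C_2,C_3)$. We have
\[
\sigma(C_1,C_2)+\sigma(C_2,C_3)=\sigma(C_1,C_3)=\textsc{Area}(H_1)+\textsc{Area}(H_2)\ge \sigma(C_1,C_2)+\sigma(C_2,C_3),
\]
so all inequalities are tight; in particular $\textsc{Area}(H_1)=\sigma(C_1,C_2)$ and $\textsc{Area}(H_2)=\sigma(C_2,C_3)$, i.e., $H_1$ and $H_2$ are minimum. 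Finally, for part~\ref{lemlist-switch}, let $H'=H_1+H'_2$ with $H'_2$ minimum from $C_2$ to $C_3$; then $\textsc{Area}(H')=\textsc{Area}(H_1)+\textsc{Area}(H'_2)=\sigma(C_1,C_2)+\sigma(C_2,C_3)=\sigma(C_1,C_3)$ by parts~\ref{lemlist-subhomotopy} and~\ref{lemlist-sigmasplit}, so $H'$ realizes the infimum and is minimum.

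The only point requiring a little care — the ``main obstacle'', such as it is — is justifying the subadditivity $\sigma(C_1,C_3)\le\sigma(C_1,C_2)+\sigma(C_2,C_3)$ at the level of infima rather than minima: given $\varepsilon>0$, pick homotopies $G_1\colon C_1\to C_2$ and $G_2\colon C_2\to C_3$ with $\textsc{Area}(G_i)\le\sigma(C_i,C_{i+1})+\varepsilon/2$, concatenate, apply additivity, and let $\varepsilon\to 0$. (One should also note this uses that concatenation of piecewise differentiable homotopies is again piecewise differentiable, matching the class over which $\sigma$ is defined.) Everything else is bookkeeping with the two displayed inequalities.
\end{proof}
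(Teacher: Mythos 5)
Your proof is correct, and it rests on exactly the two ingredients the paper uses — additivity of \textsc{Area} under concatenation and the definition of $\sigma$ as an infimum — but the logical flow is reversed. The paper proves Part~\ref{lemlist-subhomotopy} first by a direct contradiction (if $H_1$ weren't minimum, splicing in a cheaper $H_1'$ would beat $H$), and Part~\ref{lemlist-sigmasplit} then falls out immediately once all three homotopies are known to be minimum. You instead prove Part~\ref{lemlist-sigmasplit} first, sandwiching $\sigma(C_1,C_3)$ between $\sigma(C_1,C_2)+\sigma(C_2,C_3)$ from below (using $\textsc{Area}(H_i)\ge\sigma$) and from above (subadditivity of $\sigma$ via near-optimal concatenations), then read off Part~\ref{lemlist-subhomotopy} from tightness. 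Both are sound; the paper's contradiction route is marginally more self-contained because it never needs to invoke subadditivity of $\sigma$ as a separate fact (which you correctly flag as the one step requiring an $\varepsilon$-argument at the infimum level), while your ordering makes the underlying triangle-inequality structure more explicit — the same structure that reappears in the paper's appendix when $\sigma$ is shown to be a metric. Part~\ref{lemlist-switch} is handled identically in both.
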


\begin{proof}
    Let $H=H_1+H_2$ be a minimal homotopy such that $C_1 \homotopyarrow{H_1}
    C_2$ and $C_2 \homotopyarrow{H_2} C_3$.
For the sake of contradiction, assume that~$H_1$ is not minimum. Then, there 
exists a homotopy $C_1
\homotopyarrow{H_1'} C_2$ such that $\textsc{Area}({H_1}')<\textsc{Area}(H_1)$. 
Define ${H}'={H}'_1+H_2$, and observe that $H'$ is a homotopy $C_1 
\homotopyarrow{H'} C_3$ such that
$$\textsc{Area}(H')=\textsc{Area}({H}'_1)+\textsc{Area}
(H_2)<\textsc{Area}(H_1)+\textsc{Area}(H_2)=\textsc{Area}(H).$$
However, the homotopy $H$ was minimum, so we have a contradiction.
Similarly, we can show that $H_2$ must be minimum, which proves Part
\ref{lemlist-subhomotopy} of this Lemma.

    Since $H$, $H_1$, and $H_2$ are minimal (from Part
    \ref{lemlist-subhomotopy}), 
    we know that $\sigma(C_1, C_3) =
\textsc{Area}(H)$,
$\sigma(C_1,C_2) = \textsc{Area}(H_1)$, and $\sigma(C_2,C_3) =
\textsc{Area}(H_2)$.  Putting this together, we conclude~$\sigma(C_1,C_3) = \textsc{Area}(H)= \textsc{Area}(H_1)+ \textsc{Area}(H_2)=
\sigma(C_1,C_2) + \sigma(C_2,C_3)$. This proves Part \ref{lemlist-sigmasplit}.
 
Let $C_2 \homotopyarrow{H_2'} C_3$ be a minimal homotopy, and let $H' = H_1 +
H_2'$.  Then, we know that $\textsc{Area}(H') = \textsc{Area}(H_1) +
\textsc{Area}(H_2')$.  Since $H_2$ and $H_2'$ are both minimal, we have
$\textsc{Area}(H_2) = \textsc{Area}(H_2')$, and so $\textsc{Area}(H') =
\textsc{Area}(H_1) + \textsc{Area}(H_2) = \textsc{Area}(H)$, thus proving Part
\ref{lemlist-switch} since $H$ is minimum. 
\end{proof}



\subsection{Winding Area}

The winding number defines a function 
$wn(\cdot,C):\R^2 \rightarrow \Z,$
where $wn(x,C)$ is the winding number of~$C$ around the point~$x$. 
We define the \emph{winding area}~$W(C)$ of $C$ as the integral:
$$W(C)=\int_{\R^2} |wn(x,C)| \,  dx.$$
Let $f_0,f_1,\ldots,f_k$ be the faces of $C$, where $f_0$ is the outer face. 
Since $wn(\cdot,C)$ is constant at each face of the curve and $wn(f_0,C)=0$, 
we obtain the following formula:
$$W(C)= \sum_{i=1}^k  |wn(f_i,C)| \cdot \textsc{Area}(f_i).$$

For example, consider the curve in  \subfigref{MinHomEx1}.  Here, we have
$W(C)=2\textsc{Area}(f_2)+\textsc{Area}(f_1)$, which is equal to the minimum 
homotopy area of the
curve. In general, the winding area is a lower bound for the minimum homotopy
area. This has been proved by Chambers and Wang for a special
class of curves~\cite{CW13}, but the same proof applies to our more
general~setting, which gives us the following lemma.

\begin{figure}[htbp]
 \centering
  \subfloat[$W(C)=\sigma(C)$]{\label{subfig:MinHomEx1} \includegraphics{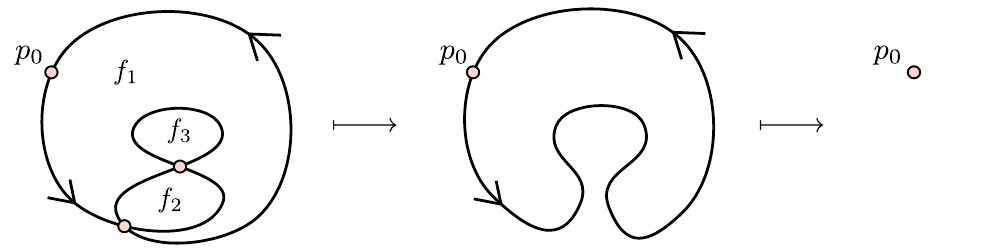}}
  \hspace*{.8cm}
  \subfloat[$W(C)<\sigma(C)$.]{\label{subfig:NotEqual}\includegraphics{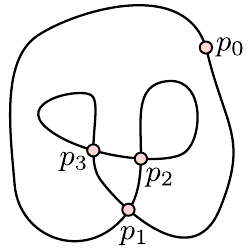}}

\caption{On the left, a minimum homotopy is given for the curve $C$, where 
$\sigma(C)=\textsc{Area}(H)=2\textsc{Area}(f_2)+\textsc{Area}(f_1)$. Notice that
$wn(f_1,C)=1, wn(f_2,C)=2$, and $wn(f_3,C)=0$. 
}
\end{figure}

\begin{lemma}[Winding Area Lower Bound]\label{lem:ChambersWang}
For any normal curve $C$, we have $W(C) \leq \sigma(C)$.
\end{lemma}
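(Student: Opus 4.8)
The plan is to show that for any homotopy $H: C \homotopyarrow{H} p_0$, we have $W(C) \le \textsc{Area}(H)$; taking the infimum over all such $H$ then yields $W(C) \le \sigma(C)$. The key observation is that the winding number is a homotopy invariant relative to sweeps: if $x \in \R^2 \setminus [C]$ and $x \notin [H(s,\cdot)]$ for all $s$ (i.e., no intermediate curve passes through $x$), then $wn(x, H(s,\cdot))$ is constant in $s$, and since $H(1,\cdot)$ is the constant curve $p_0$, we would get $wn(x,C) = 0$. Contrapositively, if $wn(x,C) \ne 0$, then $x$ must be swept by the homotopy, so $E_H(x) \ge 1$. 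More quantitatively, I want to argue that $E_H(x) \ge |wn(x,C)|$ for almost every $x$.

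First I would make this precise. Fix a generic point $x$ not on $[C]$ and not on $[p_0]$, and such that $x$ is a regular value of $H$ (this holds for a.e.\ $x$ by Sard's theorem, after a small smoothing of $H$; alternatively one restricts to the open faces of the arrangement traced out by $H$, which has measure-zero complement). Then $H^{-1}(x)$ is a $1$-manifold with boundary in $[0,1]^2$, and its boundary lies on $\{s = 0\}$ (since $H(1,\cdot) \equiv p_0 \ne x$ and $H(s,0) = H(s,1) = p_0 \ne x$). The function $s \mapsto wn(x, H(s,\cdot))$ is a step function that can only change value when $H(s,\cdot)$ passes through $x$, i.e., when the horizontal line $\{s = \text{const}\}$ crosses $H^{-1}(x)$; moreover each such crossing changes the winding number by exactly $\pm 1$. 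Since the winding number starts at $wn(x,C)$ at $s=0$ and ends at $0$ at $s=1$, the line $\{s = \text{const}\}$ must cross $H^{-1}(x)$ at least $|wn(x,C)|$ times (in total, counted over all $s$), and in particular the number of connected components $E_H(x)$ of $H^{-1}(x)$ that separate $s=0$ from $s=1$ — hence all the more $E_H(x)$ itself — is at least $|wn(x,C)|$. A cleaner way to phrase the counting: the number of points where the vertical segment from $(0, t_x)$ to $(1, t_x)$... actually, the robust statement is that $\sum_{s} (\text{crossings of } H^{-1}(x) \text{ by level } s)$ telescopes to capture total winding change; I would instead integrate: $wn(x,C) = \int_0^1 \frac{d}{ds} wn(x, H(s,\cdot))\, ds$ interpreted distributionally, and each unit jump corresponds to a distinct sheet of $H^{-1}(x)$, so $E_H(x) \ge |wn(x,C)|$.

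Having established $E_H(x) \ge |wn(x,C)|$ for a.e.\ $x$, I integrate over $\R^2$:
\[
\textsc{Area}(H) = \int_{\R^2} E_H(x)\, dx \ge \int_{\R^2} |wn(x,C)|\, dx = W(C),
\]
and taking the infimum over all piecewise-differentiable homotopies $H$ from $C$ to $p_0$ gives $W(C) \le \sigma(C)$, as claimed. Since the excerpt states that Chambers and Wang proved exactly this inequality for a restricted class of curves and that their argument carries over verbatim, I would also explicitly note that nothing in the above argument used simplicity of $C$ — only that $C$ is a closed curve based at $p_0$ and that $H$ is a homotopy rel basepoint — so the generalization to arbitrary normal curves is immediate.

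The main obstacle is the technical care needed in the pointwise bound $E_H(x) \ge |wn(x,C)|$: a general piecewise-differentiable $H$ need not have $x$ as a regular value, $H^{-1}(x)$ need not be a nice manifold, and "connected components of $H^{-1}(x)$" must be correctly related to the number of winding-number jumps along a generic horizontal line. Handling this rigorously requires either invoking Sard's theorem after smoothing, or a direct approximation/limiting argument showing the inequality is stable under the approximation of $H$ by normal homotopies (which the preliminaries guarantee exists, via \cite{FR71}). The rest is a routine integration.
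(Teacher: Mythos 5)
The paper does not actually prove this lemma---it cites Chambers and Wang \cite{CW13} and asserts that the same argument carries over to general normal curves---so I am evaluating your sketch on its own merits. Your high-level plan (show $E_H(x)\ge|wn(x,C)|$ for almost every $x$, then integrate and pass to the infimum) is the standard and correct one.

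However, the way you set up the pointwise bound contains a genuine error. You write that $H^{-1}(x)$ is a $1$-manifold with boundary in $[0,1]^2$. That is dimensionally off: $H$ maps the $2$-dimensional square into the $2$-dimensional plane, so at a regular value $x\notin[C]\cup\{p_0\}$ the preimage $H^{-1}(x)$ is a finite set of points (a $0$-manifold), not a curve; preimages of regular values are $1$-dimensional only for maps from a $3$-manifold to a surface. This is not a harmless slip: if $H^{-1}(x)$ really were a union of circles, one connected component could be crossed by many horizontal levels $\{s=\mathrm{const}\}$, so ``at least $|wn(x,C)|$ crossings'' would \emph{not} imply ``at least $|wn(x,C)|$ connected components,'' and the inequality $E_H(x)\ge|wn(x,C)|$ would not follow from the picture you describe. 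The correct version of your winding-number bookkeeping is the local-degree argument: write $H^{-1}(x)=\{(s_1,t_1),\dots,(s_m,t_m)\}$, so that $E_H(x)=m$; the step function $s\mapsto wn(x,H(s,\cdot))$ jumps by exactly $\pm1$ at each $s_i$ (the sign being $\mathrm{sign}\det DH(s_i,t_i)$), and summing the jumps telescopes to $0-wn(x,C)=\sum_{i=1}^m(\pm1)$, whence $|wn(x,C)|\le m=E_H(x)$. Equivalently, $wn(x,C)$ is the local degree of $H$ over $x$, and $|\deg_x H|\le|H^{-1}(x)|$. With that replacement, the remainder of your argument---Sard's theorem/smoothing to reduce to regular values, integration over $\R^2$, and taking the infimum over $H$---goes through.
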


For some curves, the winding area is equal to the minimum homotopy area as in
\subfigref{MinHomEx1}. In Section 3.3, we define a class of curves 
for which the winding area equals the homotopy area.
In general, however, this equality does not hold, as is illustrated in
\subfigref{NotEqual}.

 A direct consequence of  \lemref{ChambersWang} is the following.

\begin{corollary}\label{cor:Equality}
If there exists a homotopy $C \homotopyarrow{H}  p_0$ such that 
$\textsc{Area}(H)=W(C)$, then $H$ is minimum and $\sigma(C)=W(C)$.
\end{corollary}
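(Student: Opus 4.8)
The plan is to combine the definition of $\sigma(C)$ as an infimum with the lower bound from \lemref{ChambersWang}. First I would recall that $\sigma(C) = \sigma(C,p_0) = \inf_{H'} \textsc{Area}(H')$, where the infimum ranges over all piecewise differentiable homotopies from $C$ to $p_0$. Since the given homotopy $H$ is one such homotopy, it immediately follows that $\sigma(C) \leq \textsc{Area}(H) = W(C)$.

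Next I would invoke \lemref{ChambersWang}, which gives the reverse inequality $W(C) \leq \sigma(C)$. Chaining the two inequalities yields $W(C) \leq \sigma(C) \leq W(C)$, hence $\sigma(C) = W(C)$. Then $\textsc{Area}(H) = W(C) = \sigma(C) = \inf_{H'} \textsc{Area}(H')$, so $H$ attains the infimum, which is precisely the definition of a minimum homotopy. This establishes both conclusions: $H$ is minimum and $\sigma(C) = W(C)$.

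There is no real obstacle here; the statement is a formal consequence of the previously established lower bound together with the infimum characterization of $\sigma(C)$. The only thing to be careful about is to state clearly that $H$ being a competitor in the infimum gives the ``$\leq$'' direction, and that \lemref{ChambersWang} supplies the ``$\geq$'' direction, so that the sandwich forces equality and simultaneously certifies optimality of $H$.
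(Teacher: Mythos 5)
Your proposal is correct and matches the paper's reasoning: the corollary is stated there as a direct consequence of \lemref{ChambersWang}, and the intended argument is exactly your sandwich $W(C) \leq \sigma(C) \leq \textsc{Area}(H) = W(C)$. Nothing is missing.
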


More generally, we have the following theorem.

\begin{theorem}[Sense-Preserving Homotopy Area]\label{thm:Sense}
    If $C \homotopyarrow{H} p_0$ is a sense-preserving homotopy, then $H$ is minimum and
$W(C)=\sigma(C)=\textsc{Area}(H)$. Similarly, if $\textsc{Area}(H)=W(C)$, then~$H$ is sense-preserving.
\end{theorem}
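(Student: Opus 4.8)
The plan is to prove both directions via a single fact: a sense-preserving homotopy sweeps almost every point $x$ of the plane exactly $|wn(x,C)|$ times. Granting this, $\textsc{Area}(H)=\int_{\R^2}E_H(x)\,dx=\int_{\R^2}|wn(x,C)|\,dx=W(C)$, and then \lemref{ChambersWang} gives $W(C)\le\sigma(C)$ while the infimum defining $\sigma(C)$ gives $\sigma(C)\le\textsc{Area}(H)=W(C)$; hence $W(C)=\sigma(C)=\textsc{Area}(H)$, and the equality $\textsc{Area}(H)=\sigma(C)$ is precisely the statement that $H$ is minimum.

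To establish the fact, I would assume without loss of generality that $H$ is left sense-preserving. Fix a point $x$ in general position: $x\notin[C]$, $x$ is swept only finitely often, and at each sweep time $s_\ast$ the intermediate curve $H(s_\ast,\cdot)$ meets $x$ at a single regular point, transversally as $s$ varies; almost every $x$ is such. For such $x$, $E_H(x)$ equals the number of sweep times, and $wn(x,H(s,\cdot))$ is defined off the sweep times, equals $wn(x,C)$ at $s=0$ and $0$ at $s=1$, and changes by exactly $\pm1$ at each sweep. The crux is to show that left sense-preservation forces \emph{every} one of these $\pm1$ jumps to have the same sign. I would argue this by a local normal form: near a sweep $H(s_\ast,t_\ast)=x$, choose coordinates so that $H(s_\ast,\cdot)$ is locally the positively oriented first axis, with its left side the upper half-plane; left sense-preservation says $\partial_s H(s_\ast,t_\ast)$ points into the upper half-plane, i.e.\ the arc crosses $x$ moving ``upward'' as $s$ increases. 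Computing the change of winding number through a downward ray from $x$ then shows this sweep changes $wn(x,\cdot)$ by a fixed amount ($-1$ in these conventions), independent of the sweep and of $x$. Consequently $s\mapsto wn(x,H(s,\cdot))$ is monotone, so it takes $|wn(x,C)|$ unit steps to pass from $wn(x,C)$ to $0$, i.e.\ $E_H(x)=|wn(x,C)|$. Integrating over the full-measure set of such $x$ proves the fact, and the right sense-preserving case follows by reversing the orientation of every intermediate curve.

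For the converse, observe first that for every $x$ in general position the triangle inequality gives $E_H(x)\ge\bigl|\sum(\text{jumps})\bigr|=|wn(x,C)|$, with equality precisely when all the $\pm1$ jumps at $x$ have one sign. Hence $\textsc{Area}(H)=W(C)$ forces, for almost every $x$, that $s\mapsto wn(x,H(s,\cdot))$ is monotone. What remains is to upgrade this pointwise ``no-cancellation'' property to sense-preservation of $H$ itself: if $H$ were neither left- nor right-sense-preserving, there would be a curve point at which the motion is strictly to the left and another at which it is strictly to the right, and I would try to show, again via the local normal form, that each of these produces a positive-area region swept with a definite orientation, the two orientations being incompatible with the a.e.-monotonicity established above. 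I expect this converse to be the main obstacle — in particular the step that converts a local ``wrong-way'' motion of the curve into an obstruction detectable on a positive-measure set of plane points; the forward direction is comparatively routine once the single-sign-of-jumps lemma is in place.
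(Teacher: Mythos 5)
Your proposal is correct and follows essentially the same route as the paper: under (say) left sense-preservation, every sweep past a generic point decreases the winding number by one, so $E_H=|wn(\cdot,C)|$, giving $\textsc{Area}(H)=W(C)$ and minimality via the winding-area lower bound, while conversely a non-sense-preserving homotopy must sweep some positive-area region in both directions, forcing $E_H>|wn|$ there. The only difference is degree of polish: you spell out the generic-position and local-normal-form details that the paper compresses into one sentence, and you rightly identify that the converse's key step --- upgrading a single ``wrong-way'' motion of the curve into an obstruction detectable on a positive-measure set of plane points --- is exactly the point the paper asserts without elaboration.
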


\begin{proof}
Without loss of generality, assume that $H$ is left sense-preserving.
Consider $H(s,\cdot)$ as $s$ ranges over $[0,1]$.  Let $x \in \R^2 - [C]$.
Then, each time $x \in [H(s,\cdot)]$, the winding number at $x$ decreases by
one.  Hence,
$E_H(x)=wn(x,C)$  and $W(C)=\textsc{Area}(H)$, or $H$  is a
minimum homotopy and $W(C)=\sigma(C)$ by \corref{Equality}.

On the other hand, if $H$ is not sense-preserving, then there is a region $R$ that is swept by edges moving left and edges moving right. Hence, if  $x \in R$, $E_H(x)> |wn(x,C)|$. In other words $\textsc{Area}(H) >W(C)$. 
\end{proof}

\subsection{Self-Overlapping Curves and $k$-Boundaries}

Chambers and Wang introduced the notion of consistent winding numbers to
describe a class of curves for which the homotopy area and the winding area are 
equal. In this
subsection, we identify a more general class of closed curves for which the 
same equality is satisfied.

A regular normal curve $C\in\mathfrak{C}_{p_0}$ is \emph{self-overlapping} if
there exists an immersion of the two-disk $F: D^2 \rightarrow \R^2$  such that
$[C]=F|_{\partial D^2}$. If $C$ is not a regular normal curve, then we call~$C$
self-overlapping if there exists an arbitrarily-close approximation $\tilde{C}$,
where $\tilde{C}$ is a regular normal curve that is self-overlapping. The image
$F(D^2)$ is called the \emph{interior} of $C$.  Self-overlapping curves have
been investigated in~\cite{BL67,MX74,SVW92,TT61}. A dynamic programming
algorithm for testing whether a given curve is self-overlapping has been given
in \cite{SVW92}; the runtime of this algorithm is cubic in the number of
vertices of the input polygon.
Examples of self-overlapping curves are given in \figref{Selfie},
\figref{Milnor} and \subfigref{MinHomEx1}. The curve in \subfigref{NotEqual} is
an example of a curve which is not self-overlapping.
In the following theorem, we prove that the homotopy area equals the winding
area for self-overlapping curves.

\begin{theorem}[Winding Area Equality for Self-Overlapping Curves]\label{thm:Self}
If $C \in \mathfrak{C}_{p_0} $ is a self-overlapping curve, then $\sigma(C)=W(C)$.
\end{theorem}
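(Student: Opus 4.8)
The plan is to exhibit, for a self-overlapping curve $C$, an explicit homotopy $H$ with $\textsc{Area}(H)=W(C)$; by \corref{Equality} this immediately yields $\sigma(C)=W(C)$. The starting data is the immersion $F:D^2\to\R^2$ with $F|_{\partial D^2}=C$ witnessing that $C$ is self-overlapping. The idea is to contract $C$ across its own immersed interior: think of $D^2$ as foliated by a family of concentric circles $\{\gamma_s\}_{s\in[0,1]}$ shrinking from $\partial D^2$ to the center, and push this foliation forward through $F$ to obtain a family of curves $H(s,\cdot):=F\circ\gamma_s$, reparametrized so that $H(s,0)=H(s,1)=p_0$. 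This is a homotopy from $C$ to the constant curve at $F(\text{center})$; composing with a minimal homotopy moving that point to $p_0$ (which has zero area) gives a homotopy $C\homotopyarrow{H}p_0$.

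First I would verify that $H$ can be taken to be a normal homotopy (or at least piecewise differentiable so that $\textsc{Area}(H)$ is defined): after an arbitrarily small perturbation of $F$ the concentric circles map to a generic family of almost-normal curves, undergoing only finitely many homotopy moves; by the approximation remark for self-overlapping curves we may assume $C$ itself is regular normal, so this perturbation is harmless. Second, and this is the crux, I would compute $\textsc{Area}(H)=\int_{\R^2}E_H(x)\,dx$ and show it equals $W(C)=\int_{\R^2}|wn(x,C)|\,dx$. The key observation is that for $x\notin[C]$, the number of times the sweeping family $\{F\circ\gamma_s\}$ passes over $x$ equals the number of points of $F^{-1}(x)$ inside $D^2$ (each preimage disk-point is enclosed by $\gamma_s$ for $s$ larger than some threshold and not for smaller $s$, contributing exactly one crossing as $\gamma_s$ shrinks past it), i.e. $E_H(x)=\#F^{-1}(x)$, which is the local degree of $F$ at $x$. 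Since $F$ is an orientation-compatible immersion of the disk bounded by $C$, this local degree equals $wn(x,C)$, and for self-overlapping curves the winding number is everywhere nonnegative, so $\#F^{-1}(x)=wn(x,C)=|wn(x,C)|$. Integrating gives $\textsc{Area}(H)=W(C)$.

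The main obstacle is making the middle claim $E_H(x)=\#F^{-1}(x)$ rigorous. One has to choose the shrinking foliation $\{\gamma_s\}$ and the reparametrization carefully so that the homotopy is well-behaved (each $\gamma_s$ still passes through the basepoint direction, the family is generic), and then argue that along the one-parameter family $s\mapsto F\circ\gamma_s$ the point $x$ is crossed exactly once for each element of $F^{-1}(x)\cap \mathrm{int}(D^2)$ and with consistent orientation; this amounts to an index/degree computation for the map $F$ restricted to the region swept between $\gamma_s$ and $\partial D^2$. Handling points $x$ that happen to lie on the center or whose preimages accumulate near $\partial D^2$ requires the genericity of the perturbed $F$. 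Once this is established, the equality $\#F^{-1}(x)=wn(x,C)$ is the standard fact that the winding number of the boundary equals the signed count of preimages of a regular value of a disk immersion, and nonnegativity of $wn(\cdot,C)$ is part of the structure of self-overlapping curves (the interior $F(D^2)$ covers each face $f_i$ exactly $wn(f_i,C)\ge 0$ times). Combining everything with \corref{Equality} completes the proof.

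An alternative, perhaps cleaner, route would be to induct on the complexity of $C$ using a known decomposition result for self-overlapping curves (Titus/Shor--Van Wyk): a self-overlapping curve can be cut at a suitable pair of points into two self-overlapping curves of smaller complexity (or peeled of an outermost self-overlapping "tab"). In that case I would apply \lemref{splitting} to glue minimum homotopies of the pieces, using the additivity of both homotopy area and winding area under this decomposition, with the base case being a simple closed curve, whose minimum homotopy area is visibly its enclosed area $=W(C)$. I expect the direct immersed-disk-contraction argument to be the shorter of the two, with the degree-counting step being where the real work lies.
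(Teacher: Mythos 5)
Your proposal is correct and follows essentially the same route as the paper: push a deformation retraction of $D^2$ forward through the immersion $F$, establish $E_H(x)=|F^{-1}(x)|=wn(x,C)$ (the degree-counting step you flag as the crux; the paper handles it by citation), and conclude via \corref{Equality}. The only cosmetic difference is that the paper retracts the disk to a boundary point $q_0$ with $F(q_0)=p_0$, which avoids your extra zero-area adjustment for the basepoint.
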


\begin{proof}
    A straight-line deformation retract $r_s \colon D^2 \to D^2$ 
    from the unit disk $r_0(\cdot) = D^2$ to a point
    $r_1(\cdot) = q_0\in \S^1=\partial D^2$ induces a minimum-area homotopy $H$ 
from $\S^1$
to $q_0$, for which $E_H(x)=1$ if $x \in D^2$ and  $E_{H}(x)=0$ otherwise.

Now, let $F: D^2 \rightarrow \R^2$ be an immersion of the disk such that
$F(q_0)=p_0$ and the curve $C=F|_{S^1}$ is normal.
Then, the composition $f_s = F \circ r_s$ is a deformation retract of the 
immersed disk
$f_0 = F(D^2)$  to $f_1 = p_0$.  
Restricting this map to $\S^1$, we obtain a homotopy 
$H(s,t)=f_s(e^{i 2t\pi})$
%
%
for which $H(0,\cdot) = C$ and $H(1,\cdot) = p_0$.
Moreover, we have
$E_{H}(x)=|F^{-1}(x)|=wn(x,C)$; a proof of this can be found in~\cite{CS66}. 
In other words, the homotopy $H$ satisfies $\textsc{Area}(H) = W(C)$.
Hence, by \corref{Equality}, we conclude that $H$ is a minimum homotopy with 
$\sigma(C)=\textsc{Area}(H)=W(C)$.
\end{proof}

If $C$ is regular and normal, then the homotopy defined in the proof of 
\thmref{Self} is
regular. Furthermore, the intermediate curves eventually become simple loops
with Whitney number  $\pm1$.  Hence, by the Whitney-Graustein  Theorem
\cite{WH37}, we know that $\whit{C}= \pm 1$ for a self-overlapping curve $C$. 
We 
call a
self-overlapping curve positive if $\whit{C}=1$, and otherwise we call it 
negative. 
Observe that, by definition, the Jacobian of an immersion of the disk 
is either always positive or always negative. Hence, a self-overlapping curve 
is 
positive (or negative) if it can be extended to an immersion whose Jacobian is 
always positive (resp., negative).
We summarize this with the following lemma:

\begin{lemma}[Equivalent Properties for Self-Overlapping Curves]\label{lem:equivalences}
 The following statements are equivalent for a regular self-overlapping curve $C$:
 \begin{itemize}
  \item $\whit{C}=1$
  \item  An immersion $\S \to C$ can be extended to 
      an immersion of $D^2$ whose Jacobian is always positive.
  \item The winding numbers $wn(C,p)$ are non-negative for all $p \in \R^2$.
  \item If $C \homotopyarrow{H} p$ is a minimum homotopy for 
      some $p \in [C]$, then $H$  is left sense-preserving.
 \end{itemize}
\end{lemma}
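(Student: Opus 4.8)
The plan is to prove the lemma by establishing a cycle of implications, say $(1)\Rightarrow(2)\Rightarrow(3)\Rightarrow(4)\Rightarrow(1)$, relying on the structural results already developed. Recall that $C$ is assumed regular and self-overlapping throughout. The discussion preceding the lemma already records that $\whit{C}=\pm1$ for any regular self-overlapping curve, and that this sign coincides with the sign of the Jacobian of any extending immersion of $D^2$; this gives the equivalence $(1)\Leftrightarrow(2)$ essentially for free, once we observe that the Jacobian of an immersion of a connected domain has constant sign and that reversing orientation of $D^2$ flips both $\whit{C}$ and the Jacobian sign. So the real content is connecting these to the winding-number condition $(3)$ and the sense-preserving condition $(4)$.

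First I would prove $(2)\Rightarrow(3)$: given an orientation-preserving immersion $F\colon D^2\to\R^2$ with $F|_{\S}=C$, the change-of-variables / degree argument cited in the proof of \thmref{Self} (see \cite{CS66}) shows $wn(x,C)=|F^{-1}(x)|\geq 0$ for all $x\notin[C]$, and the winding area is zero on $[C]$ by convention, so all winding numbers are non-negative. For $(3)\Rightarrow(4)$, suppose all winding numbers of $C$ are non-negative and let $C\homotopyarrow{H}p$ be a minimum homotopy. By \lemref{ChambersWang} and \thmref{Self} (self-overlapping gives $\sigma(C)=W(C)$), we have $\textsc{Area}(H)=W(C)$, so by the second half of \thmref{Sense} the homotopy $H$ is sense-preserving. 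It remains to pin down the \emph{side}: since $wn(x,C)\geq 0$ everywhere and a left sense-preserving homotopy decreases winding numbers by one at each sweep while a right sense-preserving one increases them (equivalently, sweeps points of negative winding number), the sense-preserving minimum homotopy of a curve with non-negative winding numbers must be left sense-preserving — a right sense-preserving homotopy from $C$ to a point would force some face to have negative winding number in $C$.

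For the closing implication $(4)\Rightarrow(1)$, I would run the homotopy-move / Whitney-index bookkeeping: take a minimum homotopy $C\homotopyarrow{H}p$ that is left sense-preserving, approximate it by a normal homotopy, and track the Whitney index along the intermediate curves. Each homotopy move changes the Whitney index in a controlled way, and left sense-preserving-ness constrains which moves can occur (in particular it rules out the creation of the ``wrong-handed'' monogons); since the final curve is the constant curve, which we may regard as a small positively-oriented simple loop with $\whit{}=+1$, and since a left sense-preserving contraction can only shrink a positive loop, we conclude $\whit{C}=+1$. Alternatively, and more cleanly, one can argue that a left sense-preserving homotopy $H$ has $E_H(x)=wn(x,C)$ (as in the proof of \thmref{Sense}), hence $wn(x,C)\geq 0$ for all $x$, which is $(3)$; then close the loop via $(3)\Rightarrow(2)$ using that a curve with non-negative winding numbers bounds an orientation-preserving immersed disk when it is self-overlapping (its interior, oriented so that $F$ is a positive immersion), and $(2)\Rightarrow(1)$ from the preliminary discussion. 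I would present whichever of these two routes for the last arrow is shorter given the lemmas already in hand.

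The main obstacle I anticipate is the ``side'' disambiguation — all of \lemref{ChambersWang}, \corref{Equality}, and \thmref{Sense} are stated symmetrically in left versus right, so getting $(3)$ and $(4)$ to talk to the \emph{signed} quantity $\whit{C}$ rather than just $|\whit{C}|$ requires a careful orientation convention (fix, once and for all, that ``left sense-preserving'' decreases winding numbers, and that a positively-oriented immersion of $D^2$ yields non-negative winding numbers) and consistent use of it. Everything else is either cited (\cite{CS66,WH37}) or a direct application of the theorems proved just above in the paper.
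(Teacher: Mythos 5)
Your proposal is correct and uses exactly the ingredients the paper itself relies on: the paper offers no standalone proof of this lemma (it is stated as a ``summary'' of the preceding discussion on Whitney--Graustein, the constant sign of the Jacobian of an immersion, and the degree formula $wn(x,C)=|F^{-1}(x)|$ from the proof of \thmref{Self}, together with \thmref{Sense} for the sense-preserving clause). Your cycle of implications, including the careful disambiguation of ``left'' versus ``right'' via the convention that a left sense-preserving sweep decreases winding numbers, is a faithful and somewhat more complete rendering of that same argument.
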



\begin{observation}\label{obs:whitneyInvariant}
The Whitney index of a curve is invariant under \II- and
\III-moves, but not under \I-moves.  
Moreover,
a regular homotopy uses only \II- and
\III-moves, hence the complexity of a self-overlapping curve (defined above to be the
number of simple crossing points) is always even.
\end{observation}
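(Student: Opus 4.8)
The plan is to establish the three assertions in turn: invariance of $\whit{C}$ under \II- and \III-moves, its non-invariance under \I-moves, and then the parity consequence for self-overlapping curves. For the invariance, observe that a \II-move (creating or destroying a bigon) and a \III-move (inverting a triangle) are each realized by a local homotopy through \emph{regular} curves, so throughout such a move the derivative curve $C'$ stays in $\R^2\setminus\{(0,0)\}$; since the winding number of a loop about the origin is a homotopy invariant within $\R^2\setminus\{(0,0)\}$, the Whitney index is unchanged. Equivalently, this is the easy direction of the Whitney--Graustein theorem applied to the regular homotopies that realize these two moves. More generally, if $H$ is a homotopy all of whose intermediate curves are regular, then $s\mapsto\whit{H(s,\cdot)}$ is an integer-valued continuous function of $s$, hence constant.

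For the failure under \I-moves: a monogon is a self-loop whose traversal turns the tangent direction through a full $\pm2\pi$ relative to the loop-free arc, so inserting or deleting it changes $\whit{C}$ by $\pm1$; concretely, starting from a round circle, which has Whitney index $1$, a \Ib-move produces a circle with a small kink, whose Whitney index is $0$ or $2$. Combining this with the previous paragraph, if a normal homotopy contained a \I-move then the Whitney index would jump across that event time, contradicting the constancy of $s\mapsto\whit{H(s,\cdot)}$ along a homotopy through regular curves; hence a regular homotopy uses only \II- and \III-moves. I expect this step --- cleanly excluding \I-moves from regular homotopies --- to be the main obstacle, since it uses both that \II- and \III-moves genuinely are regular homotopies and that a \I-move genuinely changes $\whit{}$; alternatively it can be deferred to the classical fact that regular homotopy of plane immersions is generated by the bigon and triple-point moves.

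Finally, for the parity statement: a \II-move changes the number of simple crossing points by exactly $\pm2$ and a \III-move leaves it unchanged, so the parity of the complexity is preserved by any regular homotopy. By the discussion following \thmref{Self}, a regular self-overlapping curve $C$ is regularly homotopic to a simple loop --- truncating the retraction homotopy $f_s=F\circ r_s$ just before $s=1$ yields a regular homotopy onto an embedded loop, or alternatively $\whit{C}=\pm1$ and Whitney--Graustein provides a regular homotopy to the standard circle. A simple loop has complexity $0$, which is even, so $C$ has even complexity; the general piecewise-regular case follows because the defining regular-normal self-overlapping approximation $\widetilde{C}$ of $C$ can be taken close enough to leave the isolated transverse crossing points of $C$ unaffected, so $\widetilde{C}$ and $C$ have the same complexity.
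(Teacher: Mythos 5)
Your argument is correct and supplies exactly the reasoning the paper leaves implicit for this (unproved) observation: \II- and \III-moves realize local \emph{regular} homotopies and hence preserve the winding number of $C'$ about the origin, a \I-move passes through a cusp and shifts $\whit{C}$ by $\pm 1$, and a regular self-overlapping curve is regularly homotopic (via the disk retraction used for self-overlapping curves, or Whitney--Graustein) to an embedded loop of complexity $0$, with \II-moves changing the crossing count by $\pm 2$ and \III-moves leaving it unchanged. This is the same route the paper has in mind.
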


\begin{definition}[Decomposition]
A \emph{decomposition} of a normal curve $C$ is a set
$\Gamma=\{\gamma_i\}_{i=1}^l$ of closed subcurves of $C$ such that:
\begin{itemize}
\item Each $\gamma_i$ is self-overlapping.
\item For each $i \neq j \in \{1,2,\ldots,l\}$, $[\gamma_i] \cap [\gamma_j] \in 
P_C $.
\item $\cup_{i=1}^l [\gamma_i] =[C]$.
\end{itemize}
\end{definition}

Such a
decomposition always exists for the following reason.  Each curve contains a
self-overlapping loop. If we remove this self-overlapping loop
from the curve, we still have a closed curve which contains another
self-overlapping loop.
Continuing this process, we decompose the curve into self-overlapping loops.  An example
of a decomposition of a curve is given in \figref{MinHomEx3}. For each~$\gamma
\in \Gamma$, we define the \emph{root} $p\in P_C$ of $\gamma$ as follows when
$p_0 \notin \gamma$: 
$p := C\left( \inf \{t \in [0,1] : C(t) \in [\gamma]\} \right)$. 
If $p_0 \in \gamma$, then we define the root of $\gamma$ to be the root of
the complement~$C\setminus\gamma$.

For any decomposition, there exists an ordering
$\Gamma=\{\gamma_1,\gamma_2,\ldots,\gamma_k\}$ such that the root of~$\gamma_i$
does not appear in $\gamma_j$ for any $j \geq i$.
Thus, the decomposition $\Gamma$ defines a
homotopy~$H_{\Gamma}$ which can be obtained by contracting each subloop
$\gamma_i \in \Gamma$ to its roots, as in \thmref{Self}, starting from the last
subcurve $\gamma_k$ to the first subcurve $\gamma_1$. 
If $C \in \mathfrak{C}_{p_0}$ admits a decomposition $\{\gamma_1, \ldots,
\gamma_k \}$, where
each $\gamma_i$ is positive,  we call $C$ a $k$-boundary.
These curves
have been investigated by Titus \cite{TT61}, where he calls such curves \emph{interior
boundaries}. He also gives an algorithm to detect whether a given curve is a
\mbox{$k$-boundary.}

\begin{observation}\label{obs:kBoundary}
If $C$ is a $k$-boundary with  decomposition $\Gamma = \{ \gamma_1, \gamma_2, \ldots,
\gamma_k\}$, then 
$\textsc{Area}(H_{\Gamma}) = \sum_{i=1}^{k} W(\gamma_i) = \sum_{i=1}^{k} 
\sigma(\gamma_i)$.  By \corref{Equality}, we conclude that $H_{\Gamma}$ 
is a well-behaved minimum homotopy, as defined in the next section, and $\sigma(C) = W(C) = \textsc{Area}(H_{\Gamma})$.
In addition,
if $C$ is a two-boundary with decomposition $\{\gamma_1, \gamma_2\}$, then
$\sigma(C) = \sigma(\gamma_1) + \sigma(\gamma_2)$.
\end{observation}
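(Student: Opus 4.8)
The plan is to compute $\textsc{Area}(H_{\Gamma})$ piece by piece along its defining concatenation, and then to match the total against $W(C)$. Recall that $H_{\Gamma}$ is obtained by contracting $\gamma_k, \gamma_{k-1}, \ldots, \gamma_1$ in turn, each contraction moving only the sub-arc tracing $\gamma_i$ while leaving the rest of the current curve (and the base point $p_0$) fixed; the ordering of the decomposition is what guarantees that at the stage we contract $\gamma_i$ its root still lies on the curve, so this is a legitimate homotopy in $\mathfrak{C}_{p_0}$. Write $H_{\Gamma} = H_k + \cdots + H_1$ accordingly. For each $i$, away from the measure-zero set $[C]$ the only contributions to $H_i^{-1}(x)$ come from the moving arc, so $E_{H_i}(x)$ equals the sweep count of contracting $\gamma_i$ in isolation; using the straight-line-retract contraction from the proof of \thmref{Self} for the immersed disk $F_i$ bounded by the self-overlapping curve $\gamma_i$, this gives $E_{H_i}(x) = |F_i^{-1}(x)| = wn(x,\gamma_i)$ for almost every $x$, hence $\textsc{Area}(H_i) = W(\gamma_i) = \sigma(\gamma_i)$ (the last equality again by \thmref{Self}). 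By additivity of homotopy area over concatenation, $\textsc{Area}(H_{\Gamma}) = \sum_{i=1}^k \textsc{Area}(H_i) = \sum_{i=1}^k W(\gamma_i) = \sum_{i=1}^k \sigma(\gamma_i)$, establishing the first chain of equalities.

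Next I would show $\sum_{i=1}^k W(\gamma_i) = W(C)$. Because the subcurves in $\Gamma$ pairwise meet only in $P_C$ and together cover $[C]$, the edge sets of the $\gamma_i$ in the planar graph of $C$ partition the edge set of $C$, and each $\gamma_i$ traverses its edges once with the orientation inherited from $C$; so as integer $1$-cycles we have $C = \gamma_1 + \cdots + \gamma_k$. Winding number about a fixed $x \notin [C]$ is the intersection number of such a cycle with a generic ray from $x$ to infinity, hence is additive, giving $wn(x,C) = \sum_{i=1}^k wn(x,\gamma_i)$. Here the $k$-boundary hypothesis is essential: each $\gamma_i$ is positive, so by \lemref{equivalences} every $wn(x,\gamma_i) \geq 0$, and therefore $|wn(x,C)| = \sum_i wn(x,\gamma_i) = \sum_i |wn(x,\gamma_i)|$. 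Integrating over the plane yields $W(C) = \sum_{i=1}^k W(\gamma_i)$. Combining with the first paragraph, $\textsc{Area}(H_{\Gamma}) = W(C)$, so \corref{Equality} applies and tells us that $H_{\Gamma}$ is a minimum homotopy with $\sigma(C) = W(C) = \textsc{Area}(H_{\Gamma})$; since $H_{\Gamma}$ is a concatenation of the sense-preserving contractions of \thmref{Self} it lands in the well-behaved family introduced in the next section, and the two-boundary identity $\sigma(C) = \sigma(\gamma_1) + \sigma(\gamma_2)$ is just the case $k = 2$ of $\sigma(C) = \sum_i \sigma(\gamma_i)$.

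The hard part will be the bookkeeping in the first paragraph: one has to be careful that contracting the loops $\gamma_k, \ldots, \gamma_1$ in order really decouples, i.e., that after collapsing $\gamma_{i+1}, \ldots, \gamma_k$ the subcurve $\gamma_i$ persists verbatim on the current curve with its root still present, and that ``holding the rest of the curve fixed'' while contracting $\gamma_i$ yields a homotopy whose effective sweep set is exactly the immersed disk $F_i(D^2)$ counted with multiplicity. This is precisely where the ordering property of the decomposition and the definition of root do their work; once that is pinned down, the winding-number additivity and the appeal to \corref{Equality} are routine.
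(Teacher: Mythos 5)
Your proposal is correct and follows exactly the route the paper intends: area-additivity over the concatenated contractions, $\textsc{Area}(H_i)=W(\gamma_i)=\sigma(\gamma_i)$ from \thmref{Self}, the observation that positivity of each $\gamma_i$ upgrades the general inequality $W(C)\le\sum_i W(\gamma_i)$ to an equality (since all winding numbers are non-negative by \lemref{equivalences}), and then \corref{Equality}. The paper states this as an observation without a written proof, and your argument is the natural and correct fleshing-out of its one-line justification.
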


We call the curve $C$ a (-k)-boundary if the inverse of the curve
$C^{-1}$ is a k-boundary. Such curves admit a decomposition where each
self-overlapping subloop is negative.
More generally, 
we observe that $\whit{C}= \sum_{i=1}^l \whit{\gamma_i}$ and
$wn(x,C)=\sum_{i=1}^l wn(x,\gamma_i)$ for each point~$x$ in the plane. Hence,
$W(C) \leq \sum_{i=1}^l W(\gamma_i)$ and $\sigma(C) \leq
\textsc{Area}(H_{\Gamma}) = \sum_{i=1}^l
\sigma(\gamma_i)$.

\begin{theorem}[Minimum Homotopy Decomposition]
    Let $C$ be a self-overlapping curve. If~$\Gamma $ is a decomposition with
    $|\Gamma|>1$, then the induced homotopy $H_{\Gamma}$ is not
    minimum. Likewise, if~$C$ is a k-boundary and~$\Gamma$ is a decomposition
    of~$C$ with $|\Gamma|>k$, then $H_{\Gamma}$ cannot be minimum.
\end{theorem}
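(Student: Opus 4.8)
The plan is to compare the homotopy area of $H_\Gamma$ against the winding area lower bound from \lemref{ChambersWang}. For a self-overlapping curve $C$, \thmref{Self} tells us that $\sigma(C) = W(C) = \textsc{Area}(H_{\{C\}})$, where $\{C\}$ is the trivial decomposition; similarly, for a $k$-boundary $C$ with a canonical decomposition $\Gamma_0$ of size $k$, \obsref{kBoundary} gives $\sigma(C) = W(C) = \textsc{Area}(H_{\Gamma_0})$. So in both cases it suffices to show that any decomposition $\Gamma$ with $|\Gamma| > k$ (where $k=1$ in the self-overlapping case) satisfies the strict inequality $\textsc{Area}(H_\Gamma) > W(C)$; then $H_\Gamma$ cannot be minimum since its area strictly exceeds $\sigma(C) = W(C)$.

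First I would recall the identity established at the end of the excerpt: for \emph{any} decomposition $\Gamma = \{\gamma_1,\dots,\gamma_l\}$ we have $wn(x,C) = \sum_{i=1}^l wn(x,\gamma_i)$ for every $x \notin [C]$, and $\textsc{Area}(H_\Gamma) = \sum_{i=1}^l W(\gamma_i) = \sum_{i=1}^l \int |wn(x,\gamma_i)|\,dx$. Meanwhile $W(C) = \int |{\sum_i wn(x,\gamma_i)}|\,dx$. By the triangle inequality applied pointwise, $|{\sum_i wn(x,\gamma_i)}| \le \sum_i |wn(x,\gamma_i)|$, so $W(C) \le \textsc{Area}(H_\Gamma)$ always, with equality if and only if for almost every $x$ the numbers $wn(x,\gamma_1),\dots,wn(x,\gamma_l)$ all have the same sign (i.e., no cancellation occurs on a set of positive measure). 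So the crux is to show that when $|\Gamma|$ exceeds the "intrinsic" number $k$, cancellation must occur on a region of positive area, making the inequality strict.

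For the self-overlapping case ($k=1$): each $\gamma_i$ is self-overlapping, hence by \lemref{equivalences} its winding numbers are either all $\ge 0$ (positive type) or all $\le 0$ (negative type). Since $C$ is self-overlapping, $wn(x,C) \ge 0$ everywhere (after possibly reversing orientation), so $C$ is positive. If some $\gamma_j$ in $\Gamma$ were negative and not constant, then $wn(\cdot,\gamma_j) \le 0$ with a region of strictly negative winding number of positive area, forcing cancellation against the positive total; then $\textsc{Area}(H_\Gamma) > W(C)$ and we are done. If instead every $\gamma_i$ is positive (or trivial) but $l > 1$, I would argue that a decomposition of a self-overlapping curve into more than one nontrivial positive self-overlapping loop still forces cancellation — or more cleanly, invoke \obsref{whitneyInvariant} together with the additivity $\whit{C} = \sum_i \whit{\gamma_i}$: each positive self-overlapping loop contributes Whitney index $+1$, a negative one $-1$, and a trivial one $0$, so $\whit{C} = (\#\text{positive}) - (\#\text{negative})$. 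But $\whit{C} = 1$ since $C$ is self-overlapping, so if all loops are positive we'd need exactly one nontrivial loop, i.e., $l=1$ after discarding trivial loops — contradicting $|\Gamma|>1$ (and trivial self-overlapping loops sweeping zero area can be excluded from a genuine decomposition, or else the strict inequality degenerates and one should check the definition forbids them). The $k$-boundary case is the same Whitney-index bookkeeping: a $k$-boundary has a positive decomposition of size $k$, each positive loop contributing $+1$, so $\whit{C}=k$; any decomposition $\Gamma$ with $|\Gamma| = l > k$ must, by $\sum_i \whit{\gamma_i} = k$ with each summand in $\{-1,0,+1\}$, contain either a negative loop or a trivial loop, and in either case the sum-of-winding-areas strictly exceeds $W(C)$ — negative loops cause sign cancellation against the nonnegative total $wn(\cdot,C)$ on a positive-area region, and trivial loops should be excluded by definition.

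The main obstacle I anticipate is the bookkeeping around \emph{trivial} (zero-area, or winding-number-identically-zero) self-overlapping subloops: the Whitney-index count only controls $(\#\text{positive}) - (\#\text{negative})$, not $l$ itself, so one must argue that a legitimate decomposition cannot pad itself with arbitrarily many trivial loops. I would resolve this by appealing to the decomposition definition's requirement $\cup_i [\gamma_i] = [C]$ together with $[\gamma_i]\cap[\gamma_j]\subseteq P_C$ — a constant subloop at a point contributes nothing to the union, and a non-constant self-overlapping loop has positive winding area, so it is genuinely "used up" and cannot be double-counted; combined with the finiteness of $P_C$ this should cap $l$ and, more importantly, force the strict inequality whenever $l$ exceeds the minimal count $k$. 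If a fully clean argument via the union condition proves delicate, the fallback is to note that \emph{if} $\textsc{Area}(H_\Gamma) = W(C)$ then by the equality case above all $wn(\cdot,\gamma_i)$ agree in sign a.e., whence $C = \gamma_1 + \dots + \gamma_l$ is itself a $k$-boundary with $W(C) = \sum_i W(\gamma_i)$ and $\sum_i \sigma(\gamma_i) = \sigma(C)$, and then a short inductive/minimality argument (using \lemref{splitting}) shows the decomposition can be coarsened, contradicting the minimality of $k$.
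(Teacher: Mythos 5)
Your proof is correct and rests on the same central observation as the paper's: Whitney-index additivity $\whit{C}=\sum_i\whit{\gamma_i}$ together with $\whit{\gamma_i}=\pm 1$ forces at least one negative subloop once $|\Gamma|$ exceeds the intrinsic count ($1$ or $k$). Where you diverge is in how you convert ``a negative loop exists'' into a strict area inequality: the paper invokes \thmref{Sense}, noting that $H_\Gamma$ is right sense-preserving on the negative loop and left sense-preserving on some positive loop, hence not sense-preserving, hence $\textsc{Area}(H_\Gamma)>W(C)$. You instead argue statically via the pointwise triangle inequality $|\sum_i wn(x,\gamma_i)|\le \sum_i|wn(x,\gamma_i)|$, showing that on the positive-area region where the negative loop has $wn<0$ some other loop must have $wn>0$ (because $wn(\cdot,C)\ge 0$), so the inequality is strict there. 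The two arguments are essentially the dynamic and static faces of the same cancellation phenomenon; yours has the mild advantage of avoiding any reference to sense-preservation, the paper's is shorter given that \thmref{Sense} is already in place.

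One small point worth flagging: the obstacle you anticipate regarding ``trivial'' subloops of Whitney index $0$ never arises. The paper establishes, immediately after \thmref{Self} and again in \lemref{equivalences}, that every self-overlapping curve has Whitney index exactly $\pm 1$; since a decomposition consists of self-overlapping subcurves, every $\gamma_i$ contributes $\pm 1$ and there is no index-$0$ case to exclude. With that, the bookkeeping you set up gives $\#\text{negative}=(|\Gamma|-\whit{C})/2>0$ directly once $|\Gamma|>\whit{C}$, and the rest of your argument closes cleanly.
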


\begin{proof}
We prove the base case for a proof by induction.
Let $C$ be positive and self-overlapping, and let 
$\Gamma=\{\gamma_1,\gamma_2,\ldots,\gamma_{\ell}\}$ be a decomposition with 
$\ell>1$. Then, there exists a 
negative self-overlapping subcurve $\gamma \in \Gamma$ and a positive 
self-overlapping subcurve $\widetilde{\gamma} \in \Gamma$, since 
$1=\whit{C}=\sum_{i=1}^{\ell} \whit{\gamma_i}$ and $\whit{\gamma_i}=\pm1$.
Observe that the induced homotopy should be right sense-preserving  on $\gamma$  
and left 
sense-preserving on $\widetilde{\gamma}$. In other words, the total homotopy is not sense-preserving.
Thus, by \thmref{Sense}
$\textsc{Area}(H_{\Gamma})>W(C)$.
This implies that $H_{\Gamma}$ is not minimum by \thmref{Self}.
The second half of the proof follows from a simple inductive argument.
\end{proof}

\section{Construction of a Minimum Homotopy}

In this section, we  prove our main theorem which states that each normal
curve $C$ admits a decomposition $\Gamma$ such that the induced homotopy
$H_{\Gamma}$ is minimum. 

\subsection{Well-behaved Minimum Homotopies}

Let $C\in \mathfrak{C}_{p_0}$ be a curve and let $P_C=\{p_0,p_1,\ldots,p_n\}$ be
its set of simple crossing points. Let $C \xrightarrow{H} p_0$ be a homotopy. Observe that when we
perform the homotopy, each simple crossing point moves continuously following 
the
intersection points of intermediate curves until those simple crossing points 
are
eliminated. Assume for now that $H$ does not create new simple crossing points, 
i.e., it
does not contain any $b$ moves.  Then, each simple crossing point $p_j \in P_c$
is eliminated via either a \Ia \text{ }or a \IIa-move. 
We call a
index $j$ an \emph{anchor index} if $p_j$ is eliminated via a \Ia-move; in this 
case, we call $p_j$ the corresponding \emph{anchor point}.
Similarly, when a new intersection point $p_j$ is created by a $b$ move, $j$
is called an anchor index, if it is later eliminated by a \Ia-move.

\begin{figure}[htbp]
\centering
{\includegraphics{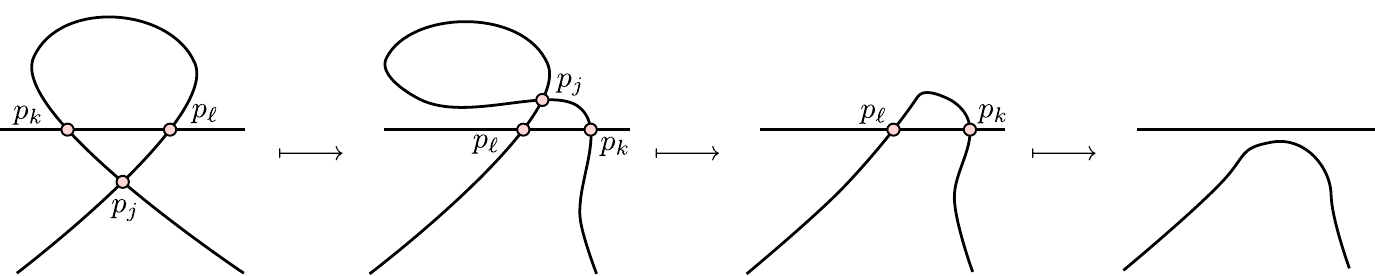}\label{fig:AnchorIndex}}
\caption{For this homotopy, $p_j$ is an anchor point, since it is removed with
    a \Ib-move.  On the other hand, $p_k$
    and $p_l$ are not anchor points as they are removed with a \IIa-move.}
\end{figure}

For a homotopy $C \homotopyarrow{H} p_0$, we define $A_H=\{j: p_j \text{ is 
an
anchor point}\}$. We order $A_H$ according to the time the vertices are
destroyed. Notice that $p_0$ is always an anchor point since the last move
for each homotopy is a \Ia-move which contracts an intermediate curve which is 
a 
simple loop.

At first glance, one may think that minimum homotopies should only decrease the
complexity of the graph of the curve, and that $b$-moves increase the
complexity. Naturally, one may conjecture that each curve has a minimum homotopy
without any $b$-moves. However, there are curves for which this is not true.
Consider for example the Milnor curve shown in \figref{Milnor}. For this curve,
any minimum homotopy has to contain a \IIb-move. (This curve is, in fact,
self-overlapping, and a minimum homotopy that is indicated by the shading 
sweeps an area equal to the winding area.)  In the following, we show that
these particular \IIb-moves do not create any complications.  Let $C_1
\homotopyarrow{H_1}  C_2 \homotopyarrow{H_2} p_0$ be a homotopy such that 
$H_1$
consists of a single \IIb-move, thus creating two new intersection points. Then,
we say that the \IIb-move is \emph{significant} if either of the intersection points  that is created by the move is an anchor point of
$H_2$. Intuitively, a significant \IIb-move makes a structural impact itself,
while an insignificant \IIb-move is only an intermediate move that allows a
portion of the curve to pass over another portion. We call a minimum homotopy
{\em well-behaved} if it does not contain any \Ib-moves or significant
\IIb-moves.  

\begin{figure}[htbp] 
    \centering
    \includegraphics{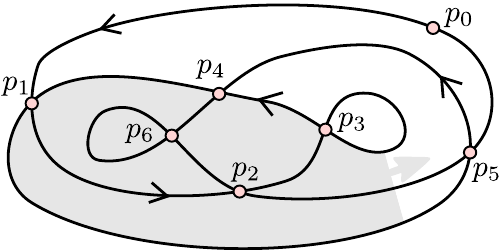} 
    \caption{This curve is
        self-overlapping and it does not admit a minimum homotopy without
        \IIb-moves. The shading indicates an immersion that also
        defines a minimum homotopy.} \label{fig:Milnor}
\end{figure}

\begin{lemma}\label{lem:SensePreserving}
Let $C \homotopyarrow{H}  p_0$ be a minimum homotopy which has a 
single anchor point $p_0$, then $H$ is sense-preserving. 
\end{lemma}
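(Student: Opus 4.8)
The plan is a proof by contradiction. Suppose $C \homotopyarrow{H} p_0$ is minimum, has $p_0$ as its only anchor point, but is not sense-preserving. I would first pin down the combinatorial shape of $H$. The terminal move of any nullhomotopy is a \Ia-move contracting a simple loop, say $L$ (which passes through $p_0$); since $A_H=\{0\}$, no crossing point other than $p_0$ is ever destroyed by a \Ia-move, and since a monogon can be destroyed only by a \Ia-move, one checks that we may take $H$ to contain no \Ib-moves. Thus $H = H' + H_f$, where $H_f$ is the terminal \Ia-move and $H'\colon C\to L$ is built only from \IIa-, \IIb-, and \III-moves; that is, $H'$ is a regular homotopy (Observation~\ref{obs:whitneyInvariant}). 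By the Whitney--Graustein theorem, $\whit{C}=\whit{L}=\pm1$; since a non-sense-preserving homotopy stays so under reversing all orientations, I may assume $\whit{C}=1$, so that $L$ is positively oriented and it suffices to derive that $H$ is left sense-preserving.

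Next I would peel off the terminal move. By \lemref{splitting}, both $H'$ and $H_f$ are minimum, so $\textsc{Area}(H_f)=\sigma(L,p_0)=W(L)=\textsc{Area}(L)$, and hence $H_f$ is left sense-preserving by \thmref{Sense} ($L$ being positively oriented). Since a concatenation of left sense-preserving homotopies is again left sense-preserving, the failure of $H$ reduces to the statement that the minimum regular homotopy $H'\colon C\to L$ is not left sense-preserving, and I aim to show this is impossible.

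The heart of the proof is to show that $C$ is positively self-overlapping, after which the last item of \lemref{equivalences}, applied to the minimum homotopy $C\homotopyarrow{H}p_0$ with $p_0\in[C]$, immediately gives that $H$ is left sense-preserving --- the desired contradiction. The trace of $H'$ is a one-parameter family of immersions of $\S$ sweeping an annular region spanned by $C$ and $L$, and the plan is to argue that minimality forces this annulus to be immersed (fold-free). Applying the winding-area lower bound (\lemref{ChambersWang}, in the relative form used in \thmref{Sense}) against the target $L$ gives $\textsc{Area}(H')\ge \int_{\R^2}|wn(x,C)-wn(x,L)|\,dx$, with strict inequality precisely when some region is swept by both a leftward and a rightward edge; a local surgery at such a region --- cancelling a \IIb--\IIa pair, or reversing the orientation of a \III-move --- strictly lowers $\textsc{Area}(H')$ without creating a new monogon, contradicting minimality of $H'$. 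Gluing onto the resulting fold-free annulus the embedded disk bounded by the simple loop $L$ yields an immersion $F\colon D^2\to\R^2$ with $F|_{\partial D^2}=C$, so $C$ is self-overlapping, and positive since $\whit{C}=1$.

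I expect the third step to be the main obstacle: ruling out folds in a minimum regular homotopy requires showing that a back-and-forth sweep can always be removed without introducing a monogon (hence a new anchor point) and without disturbing the rest of the homotopy, which is exactly where the single-anchor hypothesis, rather than mere minimality, is needed. Should the move-theoretic surgery prove too delicate to execute directly, the fallback is to invoke the regularity of Douglas' area-minimizing solution to Plateau's problem (\cite{HL80}) to conclude that the minimal annulus spanning $C\sqcup L$ is immersed, and hence that $C$ bounds an immersed disk.
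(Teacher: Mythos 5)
The paper never actually proves this lemma --- it defers to Lemma~3.2 of Chambers and Wang --- so your proposal must stand on its own, and as written it has two genuine gaps. The first is the reduction to a regular homotopy. From $A_H=\{0\}$ you may conclude that the only \Ia-move in $H$ is the terminal one, but not that $H$ contains no \Ib-moves: by the paper's definition, a crossing created by a \Ib-move is an anchor point only if it is \emph{later destroyed by a \Ia-move}, and such a crossing can instead be destroyed by a \IIa-move after other strands have swept through the created loop (this is exactly the phenomenon of insignificant $b$-moves that Section~4 is devoted to handling). So ``$H'$ is built only from \IIa-, \IIb-, and \III-moves'' and hence ``$\whit{C}=\whit{L}=\pm1$'' do not follow; every surviving \Ib-move shifts the Whitney index by $\pm1$.

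The second and more serious gap is that the heart of your argument --- that minimality forces the swept annulus to be fold-free, because any region swept both leftward and rightward admits a local \IIb--\IIa cancellation or \III-reversal that strictly decreases area without creating a new monogon --- is precisely the content of the lemma, and you leave it as an acknowledged obstacle rather than proving it. A fold need not be localized to a single cancellable pair of moves, and showing that the surgery can be performed globally without introducing a new anchor point is the whole work. The fallback via Douglas' regularity theory does not rescue this: the area-minimizing surface spanning $[C]\sqcup[L]$ is a minimal surface in space (possibly with branch points), and there is no argument that it is realized by a planar normal homotopy in the same class as $H'$. Finally, note that your architecture (prove $C$ self-overlapping first, then invoke the last item of \lemref{equivalences}) inverts the paper's, which derives self-overlappingness (\thmref{OneAnchor}) \emph{from} this lemma. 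That inversion is not circular --- the fourth item of \lemref{equivalences} follows from \thmref{Self} and \thmref{Sense} alone --- but it means you cannot lean on \thmref{OneAnchor} or \lemref{OneMove} to produce the immersion; you must construct it from the fold-free annulus you have not yet established.
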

 The proof of \lemref{SensePreserving} is identical to the proof of Lemma 3.2 
in 
\cite{CW13}.

\begin{lemma}\label{lem:OneMove}
Let  $C_1 \homotopyarrow{H_1}  C_2 \homotopyarrow{H_2} p_0$ be a minimum 
homotopy, where $H_1$ consists of a single homotopy move that is either a \IIa-move, an
insignificant \IIb-move or a \III-move. Then, the curve $C_1$ is a positive 
self-overlapping curve if and only if $C_2$ is a positive self-overlapping~curve. 
\end{lemma}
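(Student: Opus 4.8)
The plan is to analyze the single homotopy move $H_1$ and show that being positive self-overlapping is preserved in both directions, using \thmref{Self}, \corref{Equality}, \lemref{splitting}, and \lemref{SensePreserving} as the main tools. First I would observe that since $C_1 \homotopyarrow{H_1} C_2 \homotopyarrow{H_2} p_0$ is minimum, \lemref{splitting} tells us that $H_2$ is a minimum homotopy from $C_2$ to $p_0$, and $H_1$ is a minimum homotopy from $C_1$ to $C_2$; moreover $\sigma(C_1) = \textsc{Area}(H_1) + \sigma(C_2)$. Since a single \IIa-, insignificant \IIb-, or \III-move is a regular homotopy, it does not change the Whitney index (Observation~\ref{obs:whitneyInvariant}); hence $\whit{C_1} = \whit{C_2}$, and in particular $\whit{C_1} = 1$ if and only if $\whit{C_2} = 1$. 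This handles the Whitney-index half of "positive"; by \lemref{equivalences} it then remains to show that $C_1$ is self-overlapping if and only if $C_2$ is.

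For the forward direction, suppose $C_1$ is positive self-overlapping. By \thmref{Self}, $\sigma(C_1) = W(C_1)$, and the minimum homotopy $H = H_1 + H_2$ has $\textsc{Area}(H) = W(C_1) = \int |wn(\cdot, C_1)|$. The key point is that a minimum homotopy from a positive self-overlapping curve is left sense-preserving (the last bullet of \lemref{equivalences}, applied to $H$ via \lemref{SensePreserving} — indeed since $C_1$ is self-overlapping it has a single anchor point along a well-behaved minimum homotopy, and all winding numbers are non-negative, so $H$ sweeps each point exactly $wn(x,C_1)$ times, monotonically). Because $H_1$ is a single \IIa-, insignificant \IIb-, or \III-move, it preserves all winding numbers, so $wn(\cdot, C_2) = wn(\cdot, C_1) \geq 0$ everywhere; thus $W(C_2) = W(C_1) - \textsc{Area}(H_1) = \sigma(C_2)$, and the sub-homotopy $H_2$ from $C_2$ to $p_0$ is a minimum homotopy with $\textsc{Area}(H_2) = W(C_2)$ that is still left sense-preserving (being a sub-homotopy of a left sense-preserving homotopy). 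A left sense-preserving homotopy contracting $C_2$ to a point, with area equal to $W(C_2)$, exhibits $C_2$ as the boundary of an immersed disk whose Jacobian is everywhere positive: the tracks of the homotopy $H_2$, which sweep each point $x$ exactly $wn(x,C_2)$ times all in the same rotational sense, assemble into an immersion $F\colon D^2 \to \R^2$ with $F|_{\partial D^2} = C_2$. Hence $C_2$ is positive self-overlapping. For the converse direction, suppose $C_2$ is positive self-overlapping; then by \thmref{Self}, $\sigma(C_2) = W(C_2)$, the homotopy $H_2$ is left sense-preserving, and again $wn(\cdot, C_1) = wn(\cdot, C_2) \geq 0$ since $H_1$ preserves winding numbers. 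We must produce a left sense-preserving homotopy from $C_1$ to $p_0$ of area $W(C_1)$; the obstacle is that the given $H_1$ need not be sense-preserving as a move on $C_1$. Instead, observe that $\sigma(C_1) \geq W(C_1)$ by \lemref{ChambersWang}, and $\sigma(C_1) = \textsc{Area}(H_1) + \sigma(C_2) = \textsc{Area}(H_1) + W(C_2)$; since $H_1$ is a single regular move with $\textsc{Area}(H_1) \geq 0$ and winding numbers are unchanged, $W(C_1) = W(C_2)$ when $H_1$ is a \III-move or insignificant \IIb/\IIa move that sweeps zero net area over each face — more carefully, $E_{H_1}(x) \geq |wn(x,C_1) - wn(x,C_2)| = 0$, and one checks directly for each of the three move types that $H_1$ can be realized (after an arbitrarily small perturbation not affecting $\sigma$) so that $\textsc{Area}(H_1)$ matches $W(C_1) - W(C_2)$; combined with $\sigma(C_1) = W(C_1)$ from minimality, \corref{Equality} forces $H$ to be a minimum homotopy realizing the winding area, hence sense-preserving by \thmref{Sense}, hence $C_1$ is positive self-overlapping by the same immersion-reconstruction argument as above.

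The main obstacle, as flagged, is the reconstruction step: showing that a left sense-preserving minimum homotopy $C \homotopyarrow{H} p_0$ with $\textsc{Area}(H) = W(C)$ actually certifies that $C$ bounds an immersed disk with positive Jacobian. This is essentially the content already used in the proof of \thmref{Self} run in reverse; I expect to cite the analysis of sense-preserving homotopies together with \lemref{equivalences}, noting that left sense-preservation means each intermediate curve moves locally to its left, so the swept region, parametrized by $(s,t)$, has a well-defined everywhere-positive Jacobian except on a measure-zero set, and extends to an immersion of $D^2$ by the Whitney–Graustein normalization of the final simple loop. The secondary subtlety is the careful bookkeeping for the insignificant \IIb-move in the converse direction, where $H_1$ genuinely increases complexity; here the hypothesis of \emph{insignificance} is exactly what guarantees that the two newly created crossing points are removed later by \IIa-moves and therefore do not obstruct the sense-preserving structure — this is where \lemref{OneMove}'s restriction to insignificant \IIb-moves is essential and cannot be dropped.
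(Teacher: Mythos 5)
Your proposal has a genuine gap at exactly the step you flag as the ``main obstacle'': the claim that a left sense-preserving null-homotopy with $\textsc{Area}(H)=W(C)$ certifies that $C$ bounds an immersed disk with positive Jacobian. This is not ``\thmref{Self} run in reverse'' in any routine sense --- it is the converse of \thmref{Self}, and within this paper's logical structure it is a \emph{downstream consequence} of \lemref{OneMove} itself: the statement ``a curve admitting a left sense-preserving homotopy with one anchor point is positive self-overlapping'' is the $k=1$ case of \corref{Kboundary}, which is derived from \thmref{ReconstructionStable}, which rests on \thmref{OneAnchor}, whose proof is an induction whose inductive step is precisely \lemref{OneMove}. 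So your route is circular as the paper is organized, and the reconstruction argument you sketch (``the tracks of the homotopy assemble into an immersion'') would need a genuine proof: the map $(s,t)\mapsto H(s,t)$ need not be an immersion (strands may be stationary, $\partial H/\partial s$ may be tangent to the curve), and making it one is the hard content you are assuming. There is also an internal inconsistency in the forward direction: you assert that a \IIa-, \IIb-, or \III-move ``preserves all winding numbers'' and in the same sentence write $W(C_2)=W(C_1)-\textsc{Area}(H_1)$; winding numbers are \emph{not} preserved on the bigon or triangle swept by the move (that is exactly where they drop), and if they were preserved everywhere the two winding areas would be equal.

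The paper avoids all of this with a purely local gluing argument, and you should too. Cut the disk $D^2$ along a chord $L$ into two pieces $W$ and $E$; the single homotopy move determines a region $W'$ (the monogon/bigon/triangle affected by the move) and an arc $L'$ in the plane, with $\partial W'=L'$ up to the relevant strands, and $W'$ lies in the interior of $C_1$ because $\whit{C_1}=1$. If $C_1$ is positive self-overlapping, choose an immersion $F\colon D^2\to\R^2$ with $F(W)=W'$ and $F(L)=L'$; then $F|_E$ is an immersion of a disk with boundary $C_2$. Conversely, an immersion $G\colon E\to\R^2$ bounding $C_2$ with $G(L)=L'$ extends over $W$ by mapping $W$ to $W'$, yielding an immersion bounding $C_1$. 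This is elementary, needs no global sense-preserving machinery, and is the inductive step that everything else in Section~4.1 is built on. Your observation that the Whitney index is preserved by these moves (\obsref{whitneyInvariant}) is correct and harmless, but it does not substitute for the immersion-extension step.
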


\begin{proof}

Notice that both $H_1$ and $H_2$ are left sense-preserving.  Divide the disk
$D^2$ into two regions, $W$ and $E$ with a line segment $L$.	The different
homotopy moves induce regions~$W', E'$ and curve segment $L'$ in $C_1$ as shown
in \figref{NonEffective}, and since $\whit{C_1}=1$ the region~$W'$ always lies
in the interior of $C_1$. 
 
 \begin{figure}[htbp]
 \centering
\includegraphics{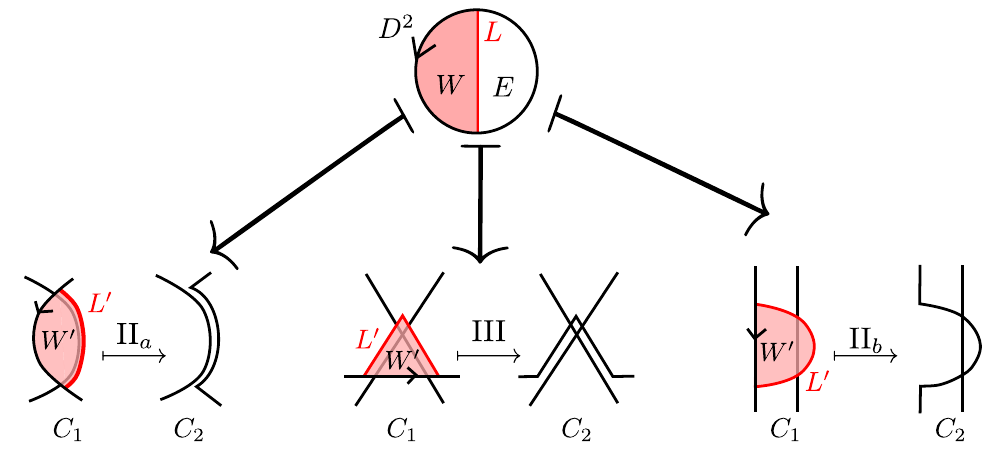}
 \caption{Here, we map the region $W$ of the disk to corresponding region 
$W'$ under the three types of homotopy moves induced by a deformation retraction
on $D^2$.
}\label{fig:NonEffective}
\end{figure}
	
Now, if $C_1$ is positive self-overlapping,  we can find an immersion $F: D^2
\rightarrow\R^2$ that maps~$W$ to $W'$ and $L$ to $L'$, and the restriction of
$F$ to $E$ gives an immersion whose boundary is~$C_2$.
Similarly, if $C_2$ is self-overlapping, then there is an immersion $G:
E\rightarrow \R^2$  that maps~$L$ to~$L'$. We can extend $G$ to $D^2$ by
mapping~$W$ to $W'$ so that $\partial W=L$ is mapped to $\partial W'=L'$. The 
extended
immersion sends the boundary of the disk to $C_1$.
\end{proof}

The theorem below follows from the previous two lemmas, and is illustrated in
\figref{Selfie}.

\begin{theorem}\label{thm:OneAnchor}
Let $C$ be a normal curve with a minimum homotopy $C \homotopyarrow{H}  p_0$. 
If $H$ is  well-behaved and has a single anchor point $p_0$, then $C$ is 
self-overlapping. Furthermore, each intermediate curve  is self-overlapping.
\end{theorem}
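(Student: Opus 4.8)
The plan is to run a backward induction on the number of homotopy moves in $H$, using \lemref{OneMove} as the inductive step and \lemref{SensePreserving} (combined with \thmref{Self} and \lemref{equivalences}) to establish the base case. Concretely, write $H$ as a finite sequence of homotopy moves $C = C_0 \to C_1 \to \cdots \to C_m = p_0$, where at each event time exactly one move occurs. Since $H$ is well-behaved, it contains no \Ib-moves and no significant \IIb-moves. Since $p_0$ is the only anchor point, no intersection point other than $p_0$ is destroyed by a \Ia-move; in particular, every \IIb-move is insignificant (its two created crossings are later removed by \IIa-moves, not \Ia-moves), and the only \Ia-move in the whole sequence is the very last one, contracting a simple loop to $p_0$. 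Thus every move in $H$ except the last is of one of the three types covered by \lemref{OneMove}: a \IIa-move, an insignificant \IIb-move, or a \III-move.

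Next I would set up the induction cleanly. The base case is the final single-loop curve $C_{m-1}$: it is a simple closed curve, hence trivially self-overlapping, and its minimum nullhomotopy (the straight-line contraction) is sense-preserving, so by \lemref{SensePreserving} it has Whitney index $\pm 1$; since the last move is the \emph{destruction} of a monogon and the homotopy is sense-preserving in the orientation matching \lemref{equivalences}, $C_{m-1}$ is \emph{positive} self-overlapping. (If one wants to avoid a sign convention here, observe that the whole homotopy $H$ is sense-preserving by \lemref{SensePreserving} since it has a single anchor point, fix the orientation so that it is left sense-preserving, and then every intermediate curve inherits left sense-preservation.) For the inductive step, suppose $C_{i+1}, C_{i+2}, \ldots, C_{m-1}$ have all been shown to be positive self-overlapping. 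The passage $C_i \to C_{i+1}$ is a single move of one of the three admissible types, and the sub-homotopy $C_i \to C_{i+1} \to p_0$ is minimum by \lemref{splitting}\eqref{lemlist-subhomotopy}. Applying \lemref{OneMove} with $H_1$ this single move and $H_2$ the remainder, we conclude $C_i$ is positive self-overlapping if and only if $C_{i+1}$ is; since $C_{i+1}$ is, so is $C_i$. Taking $i = 0$ gives that $C = C_0$ is self-overlapping, and along the way every $C_i$ has been shown self-overlapping, which is the ``furthermore'' claim.

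The main obstacle is verifying that the hypotheses of \lemref{OneMove} genuinely apply at every step, which amounts to checking that no move other than the last is a \Ia-move and no \IIb-move is significant --- i.e., that being well-behaved with a single anchor point really does force every intermediate move into the three allowed types. This requires a careful bookkeeping argument about how intersection points of intermediate curves are born and die: each crossing of $C$ (other than $p_0$) and each crossing created by a \IIb-move must, under a well-behaved homotopy with $A_H = \{0\}$, be destroyed by a \IIa-move, since destruction by a \Ia-move would make its index an anchor index, contradicting $A_H = \{0\}$; and insignificance of the \IIb-moves is then exactly the statement that their created crossings are not anchor points. A secondary, more technical point is that ``intermediate curve'' ranges over all of $H$, including the (finitely many) almost-normal curves at event times; one should note that self-overlappingness of the normal curves immediately before and after such a time, together with the fact that the move is one of the three standard types, forces the almost-normal curve at the event time to also bound an immersed disk, so the statement holds for every intermediate curve, not merely the normal ones.
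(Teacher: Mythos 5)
Your argument is correct and follows essentially the same route as the paper's proof: a backward induction starting from the simple loop just before the final \Ia-move and propagating self-overlapping-ness backward through each move via \lemref{OneMove}. You supply considerably more bookkeeping than the paper does --- in particular, the verification that well-behavedness together with $A_H=\{0\}$ forces every move other than the last to be a \IIa-, insignificant \IIb-, or \III-move (so that \lemref{OneMove} actually applies at each step), the handling of the positive/negative orientation needed to invoke \lemref{OneMove}, and the remark about the almost-normal curves at event times --- but these are filled-in details of the same argument, not a different one.
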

\begin{proof}
Notice that before the last homotopy move of $H$, the intermediate curve is a 
simple loop. Simple loops are self-overlapping. Hence by  \lemref{OneMove} each 
intermediate curve is self-overlapping. Therefore, $C$ is itself 
self-overlapping. 
\end{proof}

For a well-behaved homotopy with more than one anchor point, we have the 
following:

\begin{theorem}\label{thm:ReconstructionStable}
Let $C$ be a normal curve which admits a well-behaved  minimum homotopy $C 
\homotopyarrow{H}  p_0$. Then, there is a corresponding decomposition 
$\Gamma$ of $C$ such that $\textsc{Area}(H)=\textsc{Area}(H_{\Gamma})$. Hence, 
$H_{\Gamma}$ is also a minimum  homotopy for $C$.
\end{theorem}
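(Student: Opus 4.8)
The plan is to prove the statement by induction on $m := |A_H|$, the number of anchor points of the given well-behaved minimum homotopy $C \homotopyarrow{H} p_0$. In the base case $m = 1$ we have $A_H = \{p_0\}$, so $H$ is a well-behaved minimum homotopy with a single anchor point; \thmref{OneAnchor} then says $C$ is self-overlapping, and taking $\Gamma = \{C\}$, \thmref{Self} gives $\textsc{Area}(H_\Gamma) = W(C) = \sigma(C) = \textsc{Area}(H)$, where the last equality holds because $H$ is minimum. For the inductive step ($m \ge 2$), I would split $H$ at its earliest \Ia-move. Let $p_{j_1}$ be the first anchor point to be destroyed, and write $H = H_1 + \mu + H_2$, where $\mu$ is the \Ia-move that contracts a monogon $\ell$ whose corner is the current position of $p_{j_1}$, with $C \homotopyarrow{H_1} \widetilde C$, $\widetilde C \homotopyarrow{\mu} C''$, and $C'' \homotopyarrow{H_2} p_0$. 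Since $\mu$ is the first \Ia-move and $H$ is well-behaved, $H_1$ contains no \Ia-, \Ib-, or significant \IIb-moves -- only \IIa-, insignificant \IIb-, and \III-moves -- and $p_{j_1} \in P_C$, because well-behavedness rules out an anchor point being born in a \Ib- or significant \IIb-move. As $\ell$ bounds an empty face, $\widetilde C$ is the concatenation of $\ell$ with $C''$ at the corner, sharing no other point, and by \lemref{splitting} each of $H_1$, $\mu$, $H_2$ is minimum.

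Next I would split the curve along the track of $p_{j_1}$. The crossing $p_{j_1}$ is transverse and is not eliminated during $H_1$ (its intersection track runs continuously until $\mu$ destroys it), so its two preimages trace continuous functions $\sigma_1(s) < \sigma_2(s)$ on the homotopy domain with $\{\sigma_1(0),\sigma_2(0)\} = C^{-1}(p_{j_1}) =: \{t_1,t_2\}$ and $H_1(s,\sigma_1(s)) = H_1(s,\sigma_2(s))$ for all $s$; at $s = 1$ they bound the sub-arc of $\widetilde C$ equal to $\ell$ (it is this sub-arc, not its complement, because $\ell$ cannot contain the basepoint $p_0$, which survives until the last move). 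These curves $t = \sigma_i(s)$ cut the domain of $H_1$ -- and their $H_1$-image is the measure-zero track of $p_{j_1}$ -- into an inner and an outer region. Restricting $H_1$ to them yields a homotopy $H_1^{\mathrm{in}}$ of the subloop $\gamma_1 := C|_{[t_1,t_2]}$ onto $\ell$ and a homotopy $H_1^{\mathrm{out}}$ of the complementary closed subcurve $C' := C|_{[0,t_1] \cup [t_2,1]}$ onto $C''$, both again normal homotopies using only \IIa-, insignificant \IIb-, and \III-moves. Because the two regions meet only along a set whose image is null, areas add: $\textsc{Area}(H_1) = \textsc{Area}(H_1^{\mathrm{in}}) + \textsc{Area}(H_1^{\mathrm{out}})$, and the whole area of $\mu$ is the area swept contracting $\ell$. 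Writing $H_{\gamma_1} := H_1^{\mathrm{in}} + \mu$, a well-behaved homotopy from $\gamma_1$ to $p_{j_1}$ with the single anchor $p_{j_1}$, and $H_{C'} := H_1^{\mathrm{out}} + H_2$, a well-behaved homotopy from $C'$ to $p_0$ with $m-1$ anchor points (those anchors of $H$ other than $p_{j_1}$, which all lie on $C'$), we obtain $\textsc{Area}(H) = \textsc{Area}(H_{\gamma_1}) + \textsc{Area}(H_{C'})$.

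To finish, I would note that $\textsc{Area}(H_{\gamma_1}) + \textsc{Area}(H_{C'}) \ge \sigma(\gamma_1) + \sigma(C')$, while contracting $\gamma_1$ to $p_{j_1}$ with $C \setminus \gamma_1$ held fixed and then applying a minimum homotopy of $C'$ shows $\sigma(C) \le \sigma(\gamma_1) + \sigma(C')$; since $\sigma(C) = \textsc{Area}(H)$, all of these become equalities, so both $H_{\gamma_1}$ and $H_{C'}$ are minimum. Applying \thmref{OneAnchor} to $H_{\gamma_1}$ shows $\gamma_1$ is self-overlapping, and applying the induction hypothesis to the well-behaved minimum homotopy $H_{C'}$ of $C'$ yields a decomposition $\Gamma'$ of $C'$ with $\textsc{Area}(H_{C'}) = \textsc{Area}(H_{\Gamma'})$. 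Then $\Gamma := \{\gamma_1\} \cup \Gamma'$ is a decomposition of $C$: every member is self-overlapping, $[\gamma_1] \cup \bigcup_{\gamma \in \Gamma'}[\gamma] = [\gamma_1] \cup [C'] = [C]$, and any two members' images meet only in $P_C$ since subcurves of a normal curve meet only at its crossing points. Adding up (using $\textsc{Area}(H_\Delta) = \sum_{\gamma \in \Delta}\sigma(\gamma)$ for a decomposition $\Delta$, the minimality of $H_{\gamma_1}$, and the induction hypothesis) gives $\textsc{Area}(H_\Gamma) = \sigma(\gamma_1) + \textsc{Area}(H_{\Gamma'}) = \textsc{Area}(H_{\gamma_1}) + \textsc{Area}(H_{C'}) = \textsc{Area}(H)$, and the ``hence'' is immediate because $H$ is minimum.

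The step I expect to be the main obstacle is the bookkeeping behind splitting the curve: rigorously establishing that $p_{j_1}$ persists as a single transverse self-crossing throughout $H_1$, that the inner and outer restrictions of $H_1$ are genuine normal homotopies of $\gamma_1$ and of $C'$ with exactly the claimed move types and anchor sets, and that crossings straddling $\gamma_1$ and $C'$ -- which must all be cancelled within $H_1$ -- do not disturb this accounting. Given the results already in hand, everything else (the base case, the area additivity once the domain has been cut, and the inequality $\sigma(C) \le \sigma(\gamma_1) + \sigma(C')$) should be comparatively routine.
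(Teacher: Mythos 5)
Your proposal is correct and follows essentially the same route as the paper's proof: identify the first anchor point, restrict the homotopy to the subcurve based there, invoke \thmref{OneAnchor} to conclude that subcurve is self-overlapping, and recurse on the remainder. The paper merely asserts the restriction step and the area comparison (that $H_{\Gamma}$ sweeps no point more often than $H$), whereas you justify both via the domain-splitting along the track of $p_{j_1}$ and the additivity-plus-minimality argument; this is a more careful writing of the same argument, not a different approach.
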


\begin{proof}
Let $C\homotopyarrow{H}  p_0$ be a well-behaved homotopy. If $|A_H|=1$, then 
$C$
is self-overlapping by \thmref{OneAnchor}. In other words,
$\Gamma=\{C\}$ and the theorem follows. Hence, we assume that $|A_H|>1$. Let
$A_H=\{i_1,i_2, \ldots, i_k\}$. Consider the first anchor index $i_1$.  Let
$\gamma_1$ be the subcurve of $C$ based at the intersection $p_{i_1} \in [C]$.
Since $H$ restricted to $\gamma_1$ only has $i_1$ as an anchor index, it follows
from \thmref{OneAnchor} that $\gamma_1$ is self-overlapping. We define $H_1$ to
be the homotopy that contracts $\gamma_1$ linearly as in Theorem \ref{thm:Self}.
We denote the remaining curve $C_1$. Analogously, we consider $i_2$ and its
corresponding subcurve $\gamma_2$ of $C_1$, which is also a subcurve of $C$.
Then we define $H_{2}$ by contracting $\gamma_2$ in a similar fashion to
obtain a curve which we denote $C_2$. 
	
Continuing this way, we are left with a self-overlapping curve
$\gamma_k=C_{k-1}$ based at $p_0$ which we can contract to the point $p_0$ in a
similar way. Hence, we constructed a decomposition $\Gamma=\{\gamma_i,
i=1,2,\ldots,k\}$ of $C$. The homotopy $H_{\Gamma}$ sweeps each point of the
plane no more than $H$ does. In other words, 
$\textsc{Area}(H_{\Gamma})=\textsc{Area}(H)$ and
$H_{\Gamma}$ is also a minimum homotopy.
\end{proof}

An immediate corollary of  \thmref{ReconstructionStable} is the following:

\begin{corollary}\label{cor:Kboundary}
A curve $C$ is a k-boundary if and only if it admits a left sense-preserving 
homotopy with $k$ anchor points.
\end{corollary}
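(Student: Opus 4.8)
The plan is to prove the two implications separately, leaning on the machinery already developed. For the forward direction, suppose $C$ is a $k$-boundary with decomposition $\Gamma = \{\gamma_1,\ldots,\gamma_k\}$ where each $\gamma_i$ is positive self-overlapping. By \obsref{kBoundary}, the induced homotopy $H_\Gamma$ is a minimum homotopy with $\textsc{Area}(H_\Gamma) = W(C) = \sigma(C)$. I would then invoke \thmref{Sense}: since $\textsc{Area}(H_\Gamma) = W(C)$, the homotopy $H_\Gamma$ is sense-preserving, and because each $\gamma_i$ is positive it is in fact left sense-preserving (using the characterization in \lemref{equivalences}, contracting each positive self-overlapping loop as in \thmref{Self} is left sense-preserving). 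The contraction of $\gamma_i$ contributes exactly one anchor point, namely the root of $\gamma_i$ — it is eliminated by the final \Ia-move of that sub-homotopy — so $H_\Gamma$ has exactly $k$ anchor points. This gives a left sense-preserving homotopy with $k$ anchor points, as required.

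For the reverse direction, suppose $C \homotopyarrow{H} p_0$ is left sense-preserving with $k$ anchor points. A left sense-preserving homotopy contains no \Ib-moves and no significant \IIb-moves (a left sense-preserving homotopy is automatically well-behaved, since creating a monogon via \Ib would require an edge to move against the orientation, and a significant \IIb-move creates an anchor point whose subsequent contraction is similarly obstructed — this should be spelled out carefully). Hence $H$ is a well-behaved minimum homotopy, so \thmref{ReconstructionStable} applies: there is a decomposition $\Gamma = \{\gamma_1,\ldots,\gamma_k\}$ of $C$ with $\textsc{Area}(H) = \textsc{Area}(H_\Gamma)$, and the number of subcurves equals the number of anchor points of $H$, which is $k$. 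It remains to check that each $\gamma_i$ is \emph{positive}: since $H$ restricted to $\gamma_i$ (as constructed in the proof of \thmref{ReconstructionStable}) is left sense-preserving with a single anchor point, \lemref{equivalences} (the fourth bullet, applied in reverse together with \lemref{SensePreserving}) forces $\whit{\gamma_i} = 1$. Therefore $\Gamma$ is a decomposition into positive self-overlapping curves, so $C$ is a $k$-boundary.

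The main obstacle I anticipate is the bookkeeping in the reverse direction: first, establishing cleanly that ``left sense-preserving $\Rightarrow$ well-behaved'' (ruling out \Ib-moves and significant \IIb-moves from the sense-preserving hypothesis), and second, verifying that the decomposition produced by \thmref{ReconstructionStable} has \emph{exactly} $k$ pieces rather than possibly fewer — this requires noting that the construction there puts each anchor index in bijection with a subcurve of $\Gamma$, and that distinct anchor points yield distinct subcurves. The positivity of each $\gamma_i$ is then the payoff of the single-anchor-point analysis in \lemref{SensePreserving} and \thmref{OneAnchor}. Everything else is assembling the already-proven pieces in the right order.
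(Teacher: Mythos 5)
Your forward direction matches the paper's: decompose $C$ into $k$ positive self-overlapping pieces and contract each one; the routing through \obsref{kBoundary}, \thmref{Sense}, and \lemref{equivalences} just makes explicit what the paper asserts in one line. The reverse direction is also structured the same way the paper (implicitly) intends — extract a decomposition from the homotopy using the construction of \thmref{ReconstructionStable} and then get positivity of each piece from the sense-preserving hypothesis.

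However, the bridge you use to get from ``left sense-preserving'' to ``the hypotheses of \thmref{ReconstructionStable} hold,'' namely the claim that a left sense-preserving homotopy contains no \Ib-moves and is therefore automatically well-behaved, is not correct as stated. The paper's own proof of \lemref{RemovingIb} is built around exactly the opposite scenario: there it is established that the concatenation $H_1 + H_2$ is left sense-preserving (via \lemref{SensePreserving}), that $H_1$ \emph{is} a \Ib-move, and that this \Ib-move creates a negative loop which a left sense-preserving homotopy then \emph{expands}. So a left sense-preserving minimum homotopy can absolutely contain \Ib-moves; your parenthetical intuition (``creating a monogon via \Ib would require an edge to move against the orientation'') fails precisely for negatively oriented monogons. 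What \emph{is} true — and what you would need to argue instead — is that the new intersection point produced by such a \Ib-move cannot become an \emph{anchor} point: a left sense-preserving homotopy can never shrink the negative loop to a point via a \Ia-move, so that vertex must die by a \IIa- or \III-move. Hence all $k$ anchor points still lie in $P_C$, and the per-anchor extraction of self-overlapping subcurves can proceed (or, alternatively, invoke the main theorem to pass to a well-behaved replacement and verify that the anchor set is unchanged). Either route closes the gap; the bare assertion ``left sense-preserving $\Rightarrow$ no \Ib-moves'' does not, and you should not lean on it. The positivity step via \lemref{equivalences} and \lemref{SensePreserving} is fine once this is repaired.
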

\begin{proof}
If $C$ is a k-boundary, then $C$ admits a decomposition with $k$ positive self-overlapping subcurves. Contracting each of them to the corresponding roots gives a left sense-preserving homotopy with $k$ anchor points.On the other hand, if the homotopy is left sense-preserving with $k$ anchor points, then it decomposes the curve into
$k$ positive self-overlapping~subcurve.

\end{proof}
\subsection{Main Theorem}
To prove our main theorem, we show that there exists a well-behaved minimum 
homotopy, i.e., a homotopy that
does not contain any \Ib-move or a significant \IIb-move. This is done by 
taking an arbitrary minimum homotopy and
eliminating any such $b$-moves one by one starting from the last one. Hence, our
main theorem follows from \thmref{ReconstructionStable}. 

Now, we prove a technical lemma. Let $C$ be a curve and let $[\gamma]=\{C(t): t \in [t^*,t^{**}]\}$ be a simple subloop with the root $p=C(t^*)=C(t^{**})$. Denote $[C \backslash \gamma ]= \{C(t): t \in [0,1] \backslash (t^*,t^{**})\}]$. In other words, $C \backslash \gamma$ is the curve obtained from $C$ by removing the simple loop $\gamma$.

\begin{lemma}[Decomposing Self-Overlapping Curves]\label{lem:SemiSelf}
Let $C$ be a positive self-overlapping curve. If there is a simple 
subloop $\gamma$ of $C$ which is negative, i.e., $\whit{\gamma}=-1$, then the 
curve obtained by contracting $\gamma$ via a \Ia-move is a two-boundary.
\end{lemma}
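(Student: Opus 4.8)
The plan is to exploit the immersion $F : D^2 \to \R^2$ that witnesses $C$ being positive self-overlapping, and to "cut" the disk along an arc corresponding to the simple subloop $\gamma$. Write $C = \alpha \cdot \gamma \cdot \beta$ (up to reparametrization), where $\gamma = C|_{[t^*,t^{**}]}$ is the simple negative subloop based at $p = C(t^*) = C(t^{**})$, and $\alpha \cdot \beta = C \setminus \gamma$ is the complementary closed curve based at $p$. Since $\gamma$ is a \emph{simple} loop, it bounds an embedded disk $D_\gamma$ in the plane; the \Ia-move that contracts $\gamma$ produces the curve $C' = \alpha \cdot \beta = C \setminus \gamma$. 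Our goal is to produce a decomposition of $C'$ into two positive self-overlapping subcurves, i.e., to show $C'$ is a two-boundary.

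First I would record the index bookkeeping: by \obsref{whitneyInvariant}, a \Ia-move changes the Whitney index by $\pm 1$, and since $\whit{\gamma} = -1$ while $\whit{C} = 1$ (as $C$ is positive self-overlapping, by \lemref{equivalences}), removing $\gamma$ forces $\whit{C'} = \whit{C} - \whit{\gamma} = 2$. So $C'$ has exactly the index that a two-boundary should have; what remains is to produce the actual self-overlapping pieces. Here is where the immersion does the work: in the domain $D^2$, the preimage $F^{-1}([\gamma])$ contains a properly embedded arc $a$ (an arc of $\partial D^2$ together possibly with interior arcs, but because $\gamma$ is simple the relevant component is an arc with both endpoints the two preimages of $p$ on $\partial D^2$). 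Cutting $D^2$ along the chord $L$ joining the two boundary preimages of $p$ splits $D^2$ into two half-disks $D_1, D_2$. Restricting $F$ to each half-disk and using that the endpoints of $L$ both map to $p$, we obtain two immersed disks whose boundaries are closed curves based at $p$; I would argue that (after reabsorbing the trivial bigon/monogon created by $\gamma$) these boundaries are precisely two subcurves $\gamma_1, \gamma_2$ of $C'$ meeting only at $p \in P_{C'}$, with $[\gamma_1] \cup [\gamma_2] = [C']$. Each $\gamma_i$ is self-overlapping by construction (it bounds the immersed half-disk $F(D_i)$), and since each is self-overlapping its Whitney index is $\pm 1$; the constraint $\whit{\gamma_1} + \whit{\gamma_2} = \whit{C'} = 2$ then forces both to be $+1$, hence both positive. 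That gives a decomposition of $C'$ into two positive self-overlapping subcurves, i.e., $C'$ is a two-boundary.

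The main obstacle I anticipate is the cutting step: making precise that $F^{-1}([\gamma])$ has a component that is a nicely embedded chord $L$ in $D^2$ with both endpoints on $\partial D^2$, and that cutting along it yields two \emph{disks} (rather than a single annulus or a more complicated pieces) whose $F$-images are genuine immersed disks with the correct boundary curves. Since $\gamma$ is simple, it is embedded in the plane, and I would use this together with the fact that $F$ is an immersion restricting to $C$ on $\partial D^2$ to control $F^{-1}(D_\gamma)$: the simple loop $\gamma$ contracts across its embedded bounding disk $D_\gamma$, and pulling this contraction back through $F$ retracts a collar neighborhood of $a$ in $D^2$, effectively letting us excise $\gamma$ from the boundary data without disturbing the immersion elsewhere. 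Once the chord $L$ is identified, the half-disk argument and the Whitney-index counting are routine given \lemref{equivalences} and \obsref{whitneyInvariant}. A cleaner alternative, which I would present if the cut-locus analysis gets technical, is to invoke \thmref{Self}: contracting $\gamma$ first by a \Ia-move (sweeping $\textsc{Area}(D_\gamma)$) and then contracting $C'$ — if $C'$ is a two-boundary — sweeps $W(C') = W(C) + \textsc{Area}(D_\gamma)$ by \obsref{kBoundary}, matching a minimum homotopy of $C$; so it suffices to exhibit \emph{any} immersed-disk structure on the two pieces of $C'$, and the half-disk construction above supplies exactly that.
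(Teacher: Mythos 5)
Your approach (cut the domain disk along a chord joining the two boundary preimages of the root $p$) is genuinely different from the paper's, but the cutting step has a gap that I do not think can be patched as written. When you split $D^2$ along an interior chord $L$, the resulting half-disks $D_1$, $D_2$ each have boundary consisting of an arc of $\partial D^2$ together with $L$ itself. So the restriction $F|_{D_2}$ (say, the half carrying the non-$\gamma$ arc of $\partial D^2$) is an immersed disk whose boundary traces $C\setminus\gamma$ and then the arc $\delta := F(L)$. Nothing in the construction forces $\delta$ to lie on $[C]$, let alone coincide with a sub-arc of $C'$: $F$ is an arbitrary immersion on the interior of $D^2$, and $F(L)$ is generically a brand-new arc in the plane. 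Hence the two half-disks do not exhibit two \emph{subcurves of $C'$} as self-overlapping; they exhibit $C'\cdot\delta$ and $\gamma\cdot\delta^{-1}$ as self-overlapping, which is not a decomposition of $C'$. The ``reabsorbing the trivial bigon/monogon'' step is exactly where one would have to show $\delta$ can be pushed onto $[C']$, and you have not argued this (I do not see how to do it in general). A secondary issue is that even if the boundaries were subcurves of $C'$, they meet at $p$, and $p$ is not an intersection point of $C'$: the \Ia-move destroys the crossing at $p$, so $p\notin P_{C'}$ unless $p=p_0$, and a decomposition must meet at points of $P_{C'}$. Finally, the ``cleaner alternative'' is circular and its arithmetic does not help: it assumes $C'$ is a two-boundary (the conclusion), and the two-stage homotopy it describes sweeps $\textsc{Area}(D_\gamma)+W(C') = W(C)+2\textsc{Area}(D_\gamma) > W(C)=\sigma(C)$, so it is not a minimum homotopy of $C$ and certifies nothing.

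The paper avoids cutting the domain disk directly. It first verifies the claim for the unique (up to planar graph isomorphism) positive self-overlapping curve with exactly two crossings, where the decomposition of $C_1$ can be exhibited by hand. For a general $C$ it takes a deformation retraction of $D^2$ that fixes $\gamma$ until the intermediate curve $\widetilde{C}$ has only two crossings, reducing to the base case; the sub-homotopy from $C_1$ to $\widetilde{C}_1$ is sense-preserving because it comes from a retraction of the disk, and composing it with the left sense-preserving contraction of $\widetilde{C}_1$ gives a left sense-preserving nullhomotopy of $C_1$ with two anchor points, so $C_1$ is a two-boundary by \corref{Kboundary}. The mechanism is this corollary together with a reduction to a finite base case, not a direct surgery on the immersion $F$. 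If you want to salvage a cut-based argument, the essential thing you must supply is a way to choose the cutting arc so that its $F$-image lands on $[C']$; as stated, your $L$ does not do that.
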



\begin{proof}
    Let $p\in[C]$ be the root of $\gamma$, and let $C_1$ be the curve obtained
    from
    $C$ by contracting~$\gamma$ via a \Ia-move. We observe that 
    $\whit{C_1}=\whit{C}-\whit{\gamma}=1-(-1)=2$.

     \begin{figure}[htbp]
          \centering
          \includegraphics{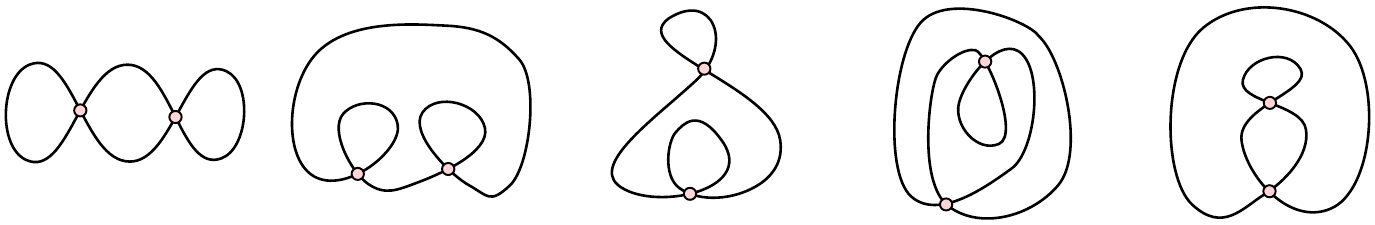}
           \caption{The complete set of normal curves with exactly two
               intersection 
               points, up to 
                    (planar) graph isomorphism. The first four curves are
                         non-self-overlapping; whereas, the 
                     the rightmost curve is
                 self-overlapping.}\label{fig:ComplexityTwo}
             \end{figure}
             First, consider the case where $C$ has exactly two intersection
             points.  In this
             case, there are five unique normal curves, up to planar graph
             isomorphism.  As shown in
             \figref{ComplexityTwo}, only one of these curves (the rightmost) is
             self-overlapping.

             Let $\gamma$ be the unique simple negative subloop of $C$, and let
             $C_1$ be
             obtained from $C$ by a single \Ia-move that contracts $\gamma$.
             We illustrate in~\figref{TwoZeroOne} that the curve $C_1$ is the
             union of two closed positive curves, which can be contracted to $p$
             with a left
             sense-preserving homotopy: first, contract the outer curve to the
             remaining
             intersection point and then contract the inner curve to the root of
             $\gamma$.  

              \begin{figure}[htbp]
                   \centering
                   \includegraphics{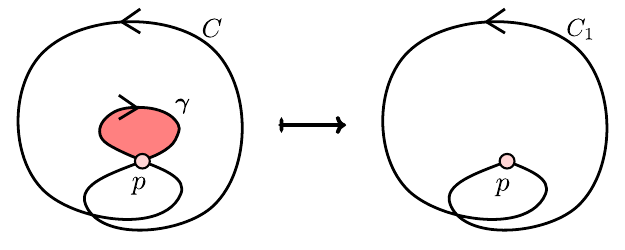}
                   \caption{The curve $C$ satisfies the assumptions of
                       \lemref{SemiSelf} when $C$
                           has exactly two intersection points.  
                           Contracting $\gamma$ with a \Ia-move yields the curve $C_1$.
                                               }\label{fig:TwoZeroOne}
                                           \end{figure}

                                           For an arbitrary self-overlapping
                                           curve, we
                                           consider an immersion
                                           $F:D^2\rightarrow\R^2$, where the
                                           boundary of the disk is
                                           mapped to $[C]$. Let~$\theta \subset
                                           \partial D^2$ have the
                                           image $F(\theta)=[\gamma]$. Let~$p$
                                           be the root of $\gamma$, and let $q$
                                           be any
                                           other point in $\gamma$
                                           whose  preimage is ${q}' \in \theta$.
                                           We obtain a homotopy~$H$ from 
                                           $C$
                                           to ${q}$ by retracting the disk $D^2$
                                           to the point ${q}'$ in 
                                           such a
                                            way that the homotopy fixes $\gamma$
                                            until an
                                            intermediate curve $\widetilde{C}$
                                            is left with only two intersections. 
                                            We know that such a homotopy exists
                                            by the following argument:
                                            at the end of the homotopy, a simple
                                            curve is contracted to a point.  The
                                            last
                                            move before this would either be a
                                            \Ia- or \IIa-move.  
                                            However, since
                                            self-overlapping curves must have at
                                            least two intersection points 
                                            by \obsref{whitneyInvariant}, the
                                            last
                                            move cannot be a \Ia-move since all
                                            intermediate curves induced from a
                                            deformation retraction of $D^2$ are
                                            necessarily self-overlapping.

                                            Notice that the intermediate curve
                                            $\widetilde{C}$ is necessarily the
                                            unique (up
                                            to graph isomorphism)
                                            normal self-overlapping curve
                                            encountered above.  
                                            Let $\widetilde{\gamma}$ be the loop
                                            isomorphic
                                            to $\gamma$ in \figref{TwoZeroOne}.
                                            The curve
                                            $\widetilde{C}_1$ obtained from
                                            $\widetilde{C}$ by contracting
                                            $\widetilde{\gamma}$ with a \Ia-move
                                            is a two-boundary that can contract 
                                            to the root $p$ of
                                            $\widetilde{\gamma}$ via a left
                                            sense-preserving homotopy 
                                            $\widetilde{H}_1$.  We now extend
                                            this to a left sense-preserving
                                            homotopy from
                                            $C_1$ to $p$, where $C_1$ is
                                            obtained from $C$ by contracting
                                            $\gamma$.
                                            Let ${C}_1
                                            \homotopyarrow{\widetilde{H}}
                                            \widetilde{C}$ be the 
                                            sub-homotopy of ${H}$ connecting
                                            ${C}_1$ to~$\widetilde{C}$.
                                            Since $\widetilde{H}$ is induced
                                            from a deformation retraction of
                                            $D^2$, 
                                            the homotopy must be
                                            sense-preserving.
                                            And since we know that
                                            $\widetilde{H_1}$
                                            is left-sense preserving, we know
                                            that $\widetilde{H}$ must be left
                                            sense-preserving (otherwise the
                                            minimal homotopy of
                                            $\widetilde{C}_1$ would be right
                                            sense-preserving as there is only
                                            one positive
                                            self-overlapping curve with two
                                            simple crossing points). 
                                            Finally, we compose these two
                                            homotopies: $C_1 
                                            \homotopyarrow{\widetilde{H}}
                                            \widetilde{C_1}
                                            \homotopyarrow{\widetilde{H_1}} 
                                            p$, which is a left sense-preserving
                                            nullhomotopy with two anchor points.
                                            Hence, by \corref{Kboundary}, 
the 
curve obtained by contracting $\gamma$ via a \Ia-move is a two-boundary.
                                        \end{proof}

Using \lemref{SemiSelf}, we prove Lemmas \ref{lem:RemovingIb}, \ref{lem:RemovingIIb}, and \ref{lem:RemovingIbOrIIb}
below which
provide the key ingredients for proving our main theorem.
\begin{lemma} \label{lem:RemovingIb}
Let $C_0 \homotopyarrow{H_1}  C_1\homotopyarrow{H_2}   p_0$ be a minimum 
homotopy, where $H_1$ consists of a single \Ib-move and $H_2$ is well-behaved. 
If $C_1$ is a positive self-overlapping curve, then $C_0$ is a two-boundary and
 $H_1+H_2$ can be replaced by a well-behaved minimum homotopy.
\end{lemma}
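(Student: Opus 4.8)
Let me analyze what needs to be proved. We have a minimum homotopy $C_0 \homotopyarrow{H_1} C_1 \homotopyarrow{H_2} p_0$ where $H_1$ is a single \Ib-move (creating a monogon/self-loop) and $H_2$ is well-behaved. We're told $C_1$ is positive self-overlapping. We need: (1) $C_0$ is a two-boundary, and (2) the whole homotopy can be replaced by a well-behaved minimum homotopy.

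The key observation: a \Ib-move on $C_1$ creates a small self-loop. Reversing $H_1$, going from $C_0$ to $C_1$ is a \Ia-move that contracts a simple loop $\gamma$ of $C_0$. So $C_0$ is obtained from $C_1$ by "un-contracting" — i.e., $C_1$ is obtained from $C_0$ by contracting the simple subloop $\gamma$ via a \Ia-move. Now $\gamma$ is a simple loop, so $\whit{\gamma} = \pm 1$. Since $\whit{C_1} = \whit{C_0} - \whit{\gamma}$ (by \obsref{whitneyInvariant}, \I-moves change the Whitney index) and $\whit{C_1} = 1$ (it's positive self-overlapping), we get $\whit{C_0} = 1 + \whit{\gamma}$, which is either $0$ or $2$.

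**Main argument.** The plan is to apply \lemref{SemiSelf} directly, but I must first rule out the case $\whit{\gamma} = +1$. If $\gamma$ were positive, then $\whit{C_0} = 2$; I claim this contradicts minimality of $H_1 + H_2$. Indeed, if $\gamma$ is a positive simple loop inside $C_0$, then contracting $\gamma$ via a \Ia-move (which is exactly $H_1$) followed by the minimum homotopy $H_2$ should be compared against the sense-preserving homotopy that contracts all of $C_0$: a \Ib-move that \emph{creates} a self-loop necessarily sweeps extra area, and it is never part of a minimum homotopy when the loop created is positive and $C_1$ is already sense-preservingly contractible. More carefully: since $C_1$ is positive self-overlapping and $\whit{C_0}=2$, if $\gamma$ is positive then $C_0$ would be a two-boundary decomposing into $\gamma$ and $C_1$, and the minimum homotopy would be the sense-preserving one contracting $C_1$ first and $\gamma$ last (or vice versa) — but that homotopy does \emph{not} begin with a \Ib-move, and by \thmref{Sense} any minimum homotopy of $C_0$ has area $W(C_0)$ and is sense-preserving, while $H_1$ being a \Ib-move forces $E_{H_1+H_2}$ to exceed $|wn|$ somewhere, a contradiction. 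So $\whit{\gamma} = -1$, i.e., $\gamma$ is negative. Then \lemref{SemiSelf} applies verbatim: $C_1$, the curve obtained from $C_0$ by contracting the negative simple subloop $\gamma$ via a \Ia-move, is a two-boundary — wait, we need $C_0$ to be the two-boundary. Re-examining: \lemref{SemiSelf} says that if $C$ is positive self-overlapping with a negative simple subloop $\gamma$, then the curve obtained by contracting $\gamma$ is a two-boundary. Here $C_1$ is positive self-overlapping, but $\gamma$ lives in $C_0$, not $C_1$. So I should instead argue directly: $C_0$ decomposes into $\gamma$ (a negative simple loop, hence a $(-1)$-boundary) together with $C_1$ (a positive self-overlapping curve, hence a one-boundary). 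But a decomposition into one positive and one negative self-overlapping loop does \emph{not} immediately make $C_0$ a two-boundary. The right approach: since $C_1$ is positive self-overlapping, it admits a left sense-preserving nullhomotopy $H_2'$ with a single anchor point (by \lemref{equivalences}/\thmref{Self}). Then $H_1 + H_2'$ is a nullhomotopy of $C_0$. I claim this is a \emph{well-behaved} homotopy after reinterpreting: the \Ib-move $H_1$ is reversible, so instead view $C_0$'s nullhomotopy as "first contract $\gamma$ via \Ia (this is $H_1$ reversed... no, $H_1$ itself goes $C_0 \to C_1$ and is a \Ib, which is the reverse direction)." Actually $H_1: C_0 \to C_1$ is a \Ib-move means going forward in the homotopy we \emph{create} a loop — so $C_1$ has one more crossing than $C_0$. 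Hmm, then $C_0$ has \emph{fewer} crossings. Let me reconsider: if $C_1$ is self-overlapping and we do a \Ib on $C_0$ to get $C_1$, then $C_0$ is $C_1$ with a self-loop removed, i.e., $C_0$ is obtained from $C_1$ by a \Ia-move. So $\whit{C_0} = \whit{C_1} \pm 1 \in \{0, 2\}$, and $C_0$ has one fewer crossing. By \lemref{SemiSelf} applied to $C_1$ (positive self-overlapping) with the simple subloop being the loop that $H_1$ creates: if that loop is negative, $C_0$ (the result of contracting it) is a two-boundary — exactly what we want. If that loop is positive, then $\whit{C_0} = 0$; I rule this out as above since a minimum homotopy beginning with a \Ib-move that creates a positive loop inside the already-positive-self-overlapping $C_1$ would not be area-minimal (the created loop's area is swept redundantly; formally, replace $H_1 + H_2$ by the sense-preserving contraction of $C_0$ using \thmref{Sense}, getting strictly smaller area unless $H_1$ was sense-compatible, which a loop-creating \Ib followed by contraction is not).

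**Replacing by a well-behaved homotopy.** Once $C_0$ is known to be a two-boundary, \obsref{kBoundary} gives $\sigma(C_0) = \sigma(\gamma_1) + \sigma(\gamma_2) = W(C_0) = \textsc{Area}(H_{\Gamma})$ where $\Gamma = \{\gamma_1,\gamma_2\}$ is the two-boundary decomposition, and $H_\Gamma$ is a left sense-preserving — hence well-behaved (no \Ib, no significant \IIb since it's built from deformation retractions of immersed disks as in \thmref{Self}) — minimum homotopy. Since $\textsc{Area}(H_\Gamma) = W(C_0) = \sigma(C_0) = \textsc{Area}(H_1 + H_2)$, this $H_\Gamma$ is the desired well-behaved minimum homotopy replacing $H_1 + H_2$.

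**Main obstacle.** The delicate point is ruling out $\whit{\gamma} = +1$ (equivalently $\whit{C_0} = 0$ or $2$ depending on orientation bookkeeping): I must show no minimum homotopy of $C_0$ can start with a \Ib-move that creates a positively-oriented self-loop when the post-move curve is positive self-overlapping. The cleanest route is via \thmref{Sense}: compute $W(C_0)$ in terms of $W(C_1)$ and the loop's enclosed area, show $\textsc{Area}(H_1 + H_2) = \textsc{Area}(H_1) + W(C_1) > W(C_0)$ because the \Ib-move's swept monogon has winding sign opposing the surrounding region (that's precisely what "positive loop created by \Ib inside a positive region" means — the new small face gets winding number $wn + 1$ while... ), forcing strict inequality, contradicting minimality. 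Getting the sign bookkeeping exactly right here — relating the orientation of the created loop, the local winding number, and whether $E_{H_1}$ overcounts — is where the real care is needed; everything else is assembly from \lemref{SemiSelf}, \thmref{Sense}, and \obsref{kBoundary}.
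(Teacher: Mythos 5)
Your overall strategy (apply \lemref{SemiSelf} to $C_1$ and the loop created by $H_1$, then invoke \obsref{kBoundary} to get a well-behaved minimum homotopy) matches the paper in its endgame, and after some mid-course confusion you do arrive at the correct picture ($\gamma$ is a simple subloop of $C_1$, and $C_0$ is obtained from $C_1$ by the \Ia-move that contracts $\gamma$). But the crux of the lemma --- showing that $\gamma$ is \emph{negative} so that \lemref{SemiSelf} applies --- is where your proof has a genuine gap, and it is precisely the step the paper handles by a different, cleaner route.

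Your attempt to rule out the positive case compares $\textsc{Area}(H_1+H_2)$ against $W(C_0)$ and hopes for a contradiction with minimality. This does not work as stated: $W(C_0)$ is only a lower bound on $\sigma(C_0)$ (\lemref{ChambersWang}), not necessarily equal to it, so $\textsc{Area}(H_1+H_2) > W(C_0)$ is not a contradiction. Your alternative suggestion --- replace $H_1+H_2$ with a sense-preserving contraction of $C_0$ via \thmref{Sense} --- also fails, because if $\whit{\gamma}=+1$ then $\whit{C_0}=0$, and there is no reason such a $C_0$ should admit any sense-preserving nullhomotopy at all (curves that are not $k$-boundaries generally do not). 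You are honest that the ``sign bookkeeping'' is unfinished, but the issue is deeper than bookkeeping: the comparison object you want to contradict against does not obviously exist.

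The paper's proof avoids this entirely. It first notes that since $H_2$ is well-behaved and $C_1$ is self-overlapping, $H_2$ has a single anchor point; and since a \Ib-move destroys nothing, $H_1+H_2$ also has a single anchor point. By \lemref{SensePreserving}, $H_1+H_2$ is therefore sense-preserving, say left sense-preserving. A left sense-preserving homotopy \emph{expands} negative loops and contracts positive loops, and the loop created by $H_1$ is expanding at the moment of creation; hence $\gamma$ must be negative. This deduction sidesteps any winding-area comparison and uses only the anchor-point structure plus orientation. If you want to salvage your approach you should replace the winding-area argument with this sense-preservation argument; the rest of your proof (applying \lemref{SemiSelf} and then \obsref{kBoundary}) is sound.
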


\begin{proof}
We observe that a negative loop is oriented clockwise and a positive loop is oriented counter-clockwise. Hence, a left sense preserving homotopy expands the negative loop and increases the area of the interior face. Similarly, a left sense-reserving homotopy contacts a positive loop and decreases the area of the interior face.

Now, since $H_2$ is well-behaved and $C_1$ is self-overlapping, we know by
\thmref{OneAnchor} that $H_2$ has one anchor point.  Since no contraction
happened in $H_1$, we know that $H_1+H_2$ has only one anchor point and the homotopy is left sense-preserving by \lemref{SensePreserving}.
This implies that $H_1$ creates a negative loop,
since the loop is expanding by 
the homotopy when it is created for the first time. Thus, by \lemref{SemiSelf}, $C_0$ is a two-boundary.
\end{proof}

A similar approach is used to eliminate significant \IIb-moves.

\begin{lemma}[Existence of a Well-Behaved Minimum 
Homotopy]\label{lem:RemovingIIb}
    Let $C_0 \homotopyarrow{H_1} C_1 \homotopyarrow{H_2} p_0$ be a minimum 
homotopy where
$H_1$ consists of a single significant \IIb-move and $H_2$ is well-behaved. 
If
$C_1$ is a two-boundary, then $C_0$ is also a two-boundary. Furthermore, $H_1+H_2$ 
can be
replaced by a well-behaved minimum homotopy $C_0 \homotopyarrow{H} p_0$.
\end{lemma}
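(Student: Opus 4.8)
The plan is to follow the proof of \lemref{RemovingIb}, replacing the negative self-loop produced by a \Ib-move with a negative simple subloop extracted from the bigon produced by the significant \IIb-move, and then to finish with \lemref{SemiSelf} and \obsref{kBoundary}. First I would record what $H_2$ gives us. Since $H_2$ is a well-behaved minimum homotopy of the two-boundary $C_1$, \thmref{ReconstructionStable} together with \obsref{kBoundary} yields a decomposition $\Gamma_2=\{\gamma_1,\gamma_2\}$ of $C_1$ with $\textsc{Area}(H_{\Gamma_2})=\textsc{Area}(H_2)$; because $\whit{C_1}=\whit{\gamma_1}+\whit{\gamma_2}=2$ and each $\whit{\gamma_i}=\pm 1$, both $\gamma_i$ are positive self-overlapping. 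In particular the anchor set of $H_2$ — and hence of $H_1+H_2$, since $H_1$ contracts nothing — has exactly two elements: the basepoint $p_0$ and the root $p$ of, say, $\gamma_1$. Let $a,b$ be the two crossings created by the significant \IIb-move, bounding a bigon $B$; neither equals $p_0$, and since the move is significant at least one of them is an anchor point of $H_2$, so exactly one — say $a$ — equals $p$, while the other, $b$, is removed in $H_2$ by a \IIa-move.

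The heart of the argument is to exhibit a negative simple subloop inside $\gamma_1$. The arc of the pushed strand bounding $B$ emanates from $a=p$, so it is a subarc of $\gamma_1$, and together with the adjacent portion of $\gamma_1$ it forms a simple subloop $\lambda$, namely the finger pushed across by $H_1$. I would argue that $\whit{\lambda}=-1$, just as in \lemref{RemovingIb}: $\gamma_1$ is positive self-overlapping, so the part of $H_2$ contracting $\gamma_1$ is left sense-preserving (\lemref{equivalences}), and minimality of $H_1+H_2$ together with this forces $H_1$ to push its finger to the left; a left sense-preserving homotopy that grows a loop out of nothing can grow only a clockwise, hence negative, loop. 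With $\lambda$ a negative simple subloop of the positive self-overlapping curve $\gamma_1$, \lemref{SemiSelf} shows that contracting $\lambda$ via a \Ia-move turns $\gamma_1$ into a two-boundary, and contracting $\lambda$ inside $C_1$ is precisely the \IIa-move that undoes $H_1$ and produces $C_0$. Reading off the resulting pieces — the two positive self-overlapping subcurves of the two-boundary obtained from $\gamma_1$, together with $\gamma_2$, after absorbing the pieces that now meet only at crossing points — gives a decomposition of $C_0$ into two positive self-overlapping subcurves, so $C_0$ is a two-boundary.

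The ``furthermore'' clause then follows immediately: by \obsref{kBoundary} the two-boundary $C_0$ admits a decomposition $\Gamma$ whose induced homotopy $H_\Gamma$ is a well-behaved minimum homotopy, and $\textsc{Area}(H_\Gamma)=W(C_0)=\sigma(C_0)=\textsc{Area}(H_1+H_2)$, so $H_\Gamma$ may replace $H_1+H_2$.

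The step I expect to be hardest is the sign and bookkeeping in the middle paragraph: showing that $H_1$ pushes its finger toward the interior side of $\gamma_1$ (so that $\lambda$ is negative rather than positive), and then checking that the two-boundary obtained from $\gamma_1$ \emph{merges with} rather than \emph{adds a piece to} the rest of $C_0$, so that the final count is exactly two positive self-overlapping subcurves. Both points look like they require a case analysis of how the four strand-ends at the crossing $a$ split between $\gamma_1$ and its complement — in the spirit of \figref{TwoZeroOne} — together with the minimality of $H_1+H_2$ to pin down the orientation of the finger.
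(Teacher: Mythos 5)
Your setup is correct and matches the paper's: since $H_2$ is well-behaved and $C_1$ is a two-boundary (but not self-overlapping, or the \IIb-move could not be significant), $H_2$ has exactly two anchor points $p_0$ and $p$, the decomposition is $\{\gamma_1,\gamma_2\}$ with both pieces positive self-overlapping, and exactly one of the two new crossings $a,b$ coincides with $p$. But from there you diverge from the paper, and the central step of your argument has a genuine gap.

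The object you call $\lambda$ does not fit the hypotheses of \lemref{SemiSelf}. That lemma requires a \emph{simple subloop}, i.e.\ a contiguous parameter interval $[t^*,t^{**}]$ of $\gamma_1$ whose image is a simple closed curve based at a single crossing. The ``finger'' created by the \IIb-move is an arc from $a$ to $b$, not a loop. Closing it up with the other side of the bigon (the arc of the stationary strand from $b$ back to $a$) gives a simple closed curve, but it is the union of two arcs on different strands of $C_1$ and is therefore not a contiguous subarc — so it is not a simple subloop of $\gamma_1$ in the required sense. Closing it up instead with ``the adjacent portion of $\gamma_1$'' forces you to traverse $\gamma_1$ all the way from $b$ back to $a$, which gives you $\gamma_1$ itself, not a proper subloop. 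There is in general no simple subloop of $\gamma_1$ rooted at $a$ that plays the role you need. Moreover, even granting some $\lambda$, the claim that ``contracting $\lambda$ inside $C_1$ is precisely the \IIa-move that undoes $H_1$'' cannot hold: a \Ia-move destroys one crossing, while the \IIa-move reversing $H_1$ destroys two, so they produce curves with different numbers of crossings. The attempted reduction to \lemref{RemovingIb} breaks down precisely because a \Ib-move creates a monogon — a genuine simple subloop to which \lemref{SemiSelf} applies directly — whereas a \IIb-move creates a bigon whose boundary is not of that form.

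By contrast, the paper's (sketch) proof does not look inside $\gamma_1$ for a small loop at all. It fixes the anchor crossing $p_k=a$, distinguishes three cases according to how the two strands through $a$ and $b$ split between $\gamma$ and $\gamma'$, then searches among the \emph{pre-existing} crossings $p_i$ of $C_0$ (on which $\gamma$ and $\gamma'$ meet after $p_k$ is smoothed away) for a new splitting point such that the two pieces $\widetilde{\alpha_i},\widetilde{\beta_i}$ of $C_0$ are positive self-overlapping; the existence of such a $p_i$ is the deferred technical core. The real difficulty — identifying a replacement anchor point inside $C_0$ — is essentially what your final paragraph anticipates as a case analysis, but it is not a byproduct of \lemref{SemiSelf}. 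You would need a different engine for that step.
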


\begin{proof}
    {\em(Note: Here, we give a sketch of the proof and leave the technical
    details to the full version of this paper.  We note where details are
    omitted below.)}
    Since~$H_1$ is a single significant \IIb-move, then we know that one of the two
crossing points created is an anchor point. Let's call that point $p_k$.  
Therefore, we have three potential cases,
each of which is illustrated in \figref{anchorSplit}.

\begin{figure}[htbp]
\centering
    \subfloat[Case $1$]{\includegraphics{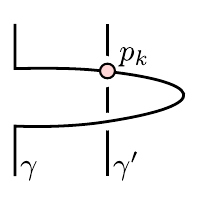}}
    \hspace*{7ex}
    \subfloat[Case $2$]{\includegraphics{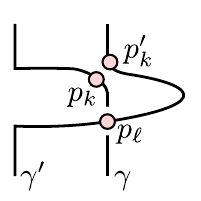}}
    \hspace*{7ex}
    \subfloat[Case $3$]{\includegraphics{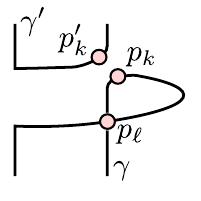}}
\caption{
    We illustrate the three cases that the significant \IIb-move can create in
    \lemref{RemovingIIb}.  To exaggerate the topology of the cases, in Cases $2$
    and $3$, we split the point $p_k$ into two points, $p_k$ and $p_k'$ (even
    though they represent the same point in $\R^2$).
}
\label{fig:anchorSplit}
\end{figure} 

In Case $1$, we notice that splitting at $p_k$ results in only one curve, hence
a contradiction (since if $p_k$ were an anchor point, $p_k$ would be the root of
two curves that form a decomposition of $C_1$).

In Cases $2$ and $3$, we create two subcurves $\gamma$ rooted at $p_k$ and $\gamma'$
rooted at $p_k'$ (as points $p_k=p_k'$; however, we distinguish them for the
purpose of exaggerating the topology).  As illustrated in \figref{anchorSplit},
we can perturb $\gamma$ and $\gamma'$ such that the intersection at $p_k$
disappears.  However, the second intersection created by $H_1$ remains; we will
call this intersection point~$p_{\ell}$.  

In Case $2$, since $\gamma$
and $\gamma'$ are both closed curves, this implies that there must exist at
least one more
intersection point in addition to $p_{\ell}$ (recalling that $p_k$
can be perturbed away).  Let $A$ be the set of intersection points between
$\gamma$ and $\gamma'$ that are also simple crossing points of $C_0$.  
Let $p_i \in A$, and notice 
that there are two curves from~$p_i$ to $p_k$ and two curves from~$p_k$
to $p_i$ such that the union of these four curves is $C$. 
Define curves $\alpha_i$ and $\beta_i$ such 
that~$\alpha_i$ follows $\gamma$ from $p_k$
to $p_i$ then $\gamma'$ from $p_i$ to $p_k'=p_k$ and~$\beta_i$ follows $\gamma'$
from $p_k'=p_k$ to~$p_i$ and then $\gamma$ from~$p_i$ to~$p_k$.
If $C_1$ is a two-boundary, then there exists a $p_i$ such that the 
curves~$\widetilde{\alpha_i}$ and $\widetilde{\beta_i}$ that map to 
$\alpha_i$ and
$\beta_i$ under $H_1$ 
are positive self-overlapping.  The proof of the existence of such an $i$ is
quite technical, and is deferred to the full version of this paper.

In Case $3$, 
we can have two subcases: first, if $\gamma \cap \gamma' = p_k$,
then we let $A$ be the set of intersection crossing points of $\gamma$, and we
can find a $p_i \in A$ using a similar technical argument as required for Case
$2$.
Second, if $|\gamma \cap \gamma'| > 1$, then we follow an argument identical to
the argument for Case $2$.

Now, we define a homotopy $H$ by first contracting $\widetilde{\alpha_i}$ to
$p_i$, and then
contracting $\widetilde{\beta_i}$ to $p_0$.  By \thmref{ReconstructionStable}, we conclude that $H$
is a minimal homotopy and $C_0$ is a two-boundary.
\end{proof}

The following lemma generalizes Lemmas~\ref{lem:RemovingIb} and \ref{lem:RemovingIIb} by removing 
additional assumptions on $H_2$. 

\begin{lemma}\label{lem:RemovingIbOrIIb}
Let $C_0 \homotopyarrow{H_1}  C_1 \homotopyarrow{H_2}  p_0$ be a minimum 
homotopy where
$H_1$ consists of a single \Ib-move or significant \IIb-move, and $H_2$ is well-behaved. 
Then, $H_1+H_2$ can be replaced by a well-behaved minimum homotopy.
\end{lemma}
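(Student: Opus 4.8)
I want to reduce Lemma~\ref{lem:RemovingIbOrIIb} to the two previously established lemmas (\ref{lem:RemovingIb} and~\ref{lem:RemovingIIb}) by arguing about the structure of the intermediate curve $C_1$. The key point is that $C_1$ arises from a well-behaved minimum homotopy $C_1 \homotopyarrow{H_2} p_0$, so Theorems~\ref{thm:OneAnchor} and~\ref{thm:ReconstructionStable} describe it precisely. First I would note that since $H=H_1+H_2$ is a minimum homotopy, $H_2$ is a minimum homotopy by \lemref{splitting}(\ref{lemlist-subhomotopy}), and it is well-behaved by assumption. Hence, by \thmref{ReconstructionStable}, there is a decomposition $\Gamma$ of $C_1$ into positive self-overlapping subcurves $\{\gamma_1, \ldots, \gamma_k\}$ with $\textsc{Area}(H_{\Gamma}) = \textsc{Area}(H_2)$, so $C_1$ is a $k$-boundary.

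Next I would split into cases on $k$. The base cases $k=1$ (so $C_1$ is positive self-overlapping) and $k=2$ (so $C_1$ is a two-boundary) are exactly the hypotheses of \lemref{RemovingIb} and \lemref{RemovingIIb} respectively; in those cases the conclusion is immediate. For $k \geq 2$ in general, I would use the ordering of the decomposition (the paragraph after the Decomposition definition guarantees an ordering $\gamma_1, \ldots, \gamma_k$ where the root of $\gamma_i$ does not appear in $\gamma_j$ for $j \geq i$) together with \lemref{splitting}(\ref{lemlist-switch}) to ``factor off'' the subcurves whose roots are disjoint from the region affected by $H_1$. Concretely, the $b$-move $H_1$ creates intersection points in a small neighborhood; only the subcurves of $\Gamma$ that pass through that neighborhood (and their roots) are relevant, and after contracting all the other $\gamma_i$ first — which we may reorder to the front of $H_2$ by the splitting lemma — we are left with an intermediate curve that is either positive self-overlapping or a two-boundary, to which \lemref{RemovingIb} or \lemref{RemovingIIb} applies directly. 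Replacing $H_1 + (\text{the relevant tail of } H_2)$ by the well-behaved homotopy produced by those lemmas, and re-appending the contractions of the irrelevant $\gamma_i$'s, yields a well-behaved minimum homotopy for $C_0$, since well-behavedness is a local property of each move and the reassembled homotopy has no \Ib-moves and no significant \IIb-moves.

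**The main obstacle.** The delicate step is the reordering/localization argument: I must be sure that the subcurves of the decomposition $\Gamma$ that are irrelevant to $H_1$ can genuinely be contracted \emph{before} the portion of $H_2$ that interacts with the $b$-move, without destroying minimality or well-behavedness, and that what remains after doing so is literally a single positive self-overlapping curve or a genuine two-boundary (not merely a $j$-boundary for some $j > 2$). This requires checking that the $b$-move in $H_1$ only affects two ``strands'' of the curve, so that after contracting everything else, at most two anchor points survive in the relevant remaining piece — which is precisely why Lemmas~\ref{lem:RemovingIb} and~\ref{lem:RemovingIIb} were stated for the one- and two-boundary cases. I expect this bookkeeping, rather than any new geometric idea, to be where the real work lies; the geometric content is entirely carried by \lemref{SemiSelf} and the two preceding lemmas.
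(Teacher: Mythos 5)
Your high-level route---apply \lemref{splitting} to see that $H_2$ is a minimum well-behaved homotopy, invoke \thmref{ReconstructionStable} to get a decomposition $\Gamma$ of $C_1$, localize the $b$-move to the one or two subcurves of $\Gamma$ it touches, and then feed those into \lemref{RemovingIb} or \lemref{RemovingIIb}---is the same route the paper takes. The paper does it a bit more directly: it lets $\gamma\in\Gamma$ be the self-overlapping subcurve containing the monogon/bigon created by $H_1$, applies \lemref{RemovingIb} to $\gamma$ (for the \Ib{} case) or asserts a two-boundary $\gamma'\supseteq\gamma$ and applies \lemref{RemovingIIb} to $\gamma'$ (for the \IIb{} case), and observes that the contractions of the remaining subcurves of $\Gamma$ are unaffected. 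Your reordering-via-the-splitting-lemma framing is a reasonable alternative packaging of the same localization step.

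However, there is a genuine error early on: you claim that \thmref{ReconstructionStable} produces a decomposition of $C_1$ into \emph{positive} self-overlapping subcurves, hence that $C_1$ is a $k$-boundary. That theorem says nothing about signs---the subcurves extracted from the anchor points are merely self-overlapping, and for a generic minimum homotopy they can be a mix of positive and negative (consider a curve like the one in \subfigref{NotEqual}, where minimality forces some portions to be swept left and others right). The paper never makes your global positivity claim; it only assumes, \emph{without loss of generality}, that the single relevant $\gamma$ is positive (the negative case following by inverting the curve). Since $k$-boundariness is irrelevant to the rest of your argument, this is a removable error, but it should be excised: what you actually get from \thmref{ReconstructionStable} is just that $\Gamma$ exists and that $H_\Gamma$ is a minimum homotopy.

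There is also a gap in your handling of the \IIb{} case that mirrors the hardest part of the paper's own argument. You say that after factoring off the irrelevant $\gamma_i$ you are ``left with an intermediate curve that is either positive self-overlapping or a two-boundary,'' but you give no reason why the affected piece should be a two-boundary rather than, say, a single self-overlapping subcurve that is merely large enough to contain both strands of the bigon (in which case \lemref{RemovingIIb} does not apply, since its hypothesis is specifically that $C_1$ is a two-boundary). The paper's own proof flags exactly this as the technical heart---``it can be shown by case analysis that there must be a two-boundary $\gamma'$ that contains $\gamma$, otherwise the \IIb-move is not significant or the homotopy not minimum''---and defers it. Your proposal would need to address this point: either reproduce that case analysis, or argue that if the two strands land in the same $\gamma_i$ the \IIb-move is insignificant or redundant (a \III-move would do), so that the significant case always straddles two subcurves whose union is the required two-boundary.
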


\begin{proof}
Since $H_2$ is already well-behaved, there is a corresponding decomposition
$\Gamma_1$ of $C_1$ by \thmref{ReconstructionStable}. Let $\gamma$ be the
self-overlapping curve containing the newly created loop by $H_1$. Without loss of generality, assume
that $\gamma$ is positive. 
If $H_1$ consists of a single \Ib-move, then we apply \lemref{RemovingIb} to $\gamma$. The remainder of $H_2$ remains well-behaved.
%

If $H_1$ consists of a significant \IIb-move, then it can be shown by case analysis  that there must be a two-boundary $\gamma'$ that contains $\gamma$, otherwise the \IIb-move is not significant or the homotopy not minimum. We apply \lemref{RemovingIIb} to $\gamma'$ and the remainder of $H_2$ remains well-behaved.
\end{proof}

We are now ready to prove our main theorem.

\begin{theorem}[Main Theorem]\label{thm:main}
Let $C$ be a normal curve. Then:
\begin{itemize}
\item Any minimum homotopy $C\homotopyarrow{H}  p_0$ can be replaced with a 
well-behaved minimum homotopy.
\item There exists a decomposition $\Gamma$ such that the induced homotopy 
$H_{\Gamma}$ is minimum.
\end{itemize}
\end{theorem}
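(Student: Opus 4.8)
The plan is to prove the Main Theorem by induction on the number of $b$-moves (that is, \Ib-moves and significant \IIb-moves) appearing in a given minimum homotopy, reducing to the already-established machinery. So let $C \homotopyarrow{H} p_0$ be an arbitrary minimum homotopy. If $H$ contains no \Ib-moves and no significant \IIb-moves, then $H$ is already well-behaved, and the first bullet is immediate. Otherwise, among all the $b$-moves of $H$, I would pick the one that occurs \emph{last} in time; call the event time $s^*$ and split $H = G_1 + H_1 + G_2$, where $G_1$ is the portion before $s^*$, $H_1$ is the single offending $b$-move, and $G_2$ is the portion after. By \lemref{splitting}\eqref{lemlist-subhomotopy}, each of $G_1$, $H_1+G_2$, and $G_2$ is itself a minimum homotopy. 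Since $s^*$ was the last $b$-move, the homotopy $G_2$ contains no \Ib-moves or significant \IIb-moves, i.e.\ $G_2$ is well-behaved. Writing $C_0 \homotopyarrow{H_1} C_1 \homotopyarrow{G_2} p_0$, we are exactly in the hypothesis of \lemref{RemovingIbOrIIb}, which lets us replace $H_1 + G_2$ by a well-behaved minimum homotopy $C_0 \homotopyarrow{H'} p_0$.

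Now I would re-assemble: $G_1 + H'$ is a homotopy from $C$ to $p_0$, and by additivity of area (together with $\textsc{Area}(H') = \textsc{Area}(H_1+G_2)$, which holds because both are minimum homotopies from $C_0$ to $p_0$, using \lemref{splitting}\eqref{lemlist-switch} or simply minimality) it has the same area as $H$; hence $G_1 + H'$ is again a minimum homotopy. The key point is a strict decrease in a well-chosen complexity measure: $H'$ is well-behaved, so it contributes no $b$-moves; and $G_1$ is a strict prefix of $H$ that omits at least the move $H_1$, so the number of \Ib-moves and significant \IIb-moves in $G_1 + H'$ is strictly smaller than in $H$. (One must be slightly careful that surgery inside \lemref{RemovingIbOrIIb} does not introduce new $b$-moves earlier in time than $s^*$; but the replacement homotopy $H'$ produced there is built from linear contractions of self-overlapping subcurves as in \thmref{Self}/\thmref{ReconstructionStable}, which are sense-preserving and hence $b$-move-free, so this does not happen.) Iterating this reduction at most finitely many times, we arrive at a well-behaved minimum homotopy for $C$, proving the first bullet.

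For the second bullet, once we have a well-behaved minimum homotopy $C \homotopyarrow{\widehat H} p_0$ in hand, we simply invoke \thmref{ReconstructionStable}: it produces a decomposition $\Gamma$ of $C$ with $\textsc{Area}(H_\Gamma) = \textsc{Area}(\widehat H) = \sigma(C)$, so $H_\Gamma$ is a minimum homotopy. This is precisely the desired conclusion.

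I expect the main obstacle to be the bookkeeping in the inductive step rather than any new mathematical idea: one must verify carefully that \lemref{RemovingIbOrIIb} applies (its hypothesis demands that \emph{everything after} the chosen $b$-move be well-behaved, which is why choosing the temporally last $b$-move is essential), and that the replacement it returns genuinely reduces the count of offending moves without creating fresh ones elsewhere. A secondary subtlety is confirming that ``significance'' of a \IIb-move is stable under the surgery — i.e.\ that after replacing the tail, a move that was insignificant does not become significant — but since significance is defined relative to the anchor structure of the tail homotopy, and our replacement preserves minimality and produces a decomposition-induced tail, this can be checked directly. All the genuinely hard geometry (the existence of the good splitting index $p_i$, the self-overlapping extensions) has already been absorbed into Lemmas~\ref{lem:SemiSelf}, \ref{lem:RemovingIb}, \ref{lem:RemovingIIb}, and \ref{lem:RemovingIbOrIIb}, so the Main Theorem itself is a clean induction on top of them.
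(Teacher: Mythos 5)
Your proof follows essentially the same route as the paper: split at the temporally last \Ib- or significant \IIb-move, note the tail is well-behaved, apply \lemref{RemovingIbOrIIb} to replace the offending move plus its tail, and iterate; the second bullet then follows from \thmref{ReconstructionStable}. One small imprecision in your parenthetical: the replacement homotopy need not be literally $b$-move-free (the Milnor curve shows that even sense-preserving contractions of self-overlapping curves can require \IIb-moves), but it \emph{is} well-behaved, i.e.\ free of \Ib-moves and significant \IIb-moves, which is all the downward induction on the count requires; the subtlety you flag about whether a formerly insignificant \IIb-move in the prefix could become significant after the tail is swapped out is a real bookkeeping point that the paper also leaves implicit.
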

\begin{proof}

We split $H$ into a sequence of subhomotopies $C\homotopyarrow{H_1} 
C_1\homotopyarrow{H_2}  C_2 \ldots \homotopyarrow{H_k}  p_0$, 
$H=\sum_{i=1}^k H_i$
where each subhomotopy consists of a single homotopy move. If none of the moves
is a \Ib-move or a significant \IIb-move, then the homotopy is already
well-behaved. Otherwise, let $C_{j-1} \homotopyarrow{H_j} C_j$ be the 
subhomotopy
containing the last such $b$ move. Then, we can replace
$H_j+H_{j+1}+\ldots+H_k$ with a well-behaved homotopy, $\widetilde{H_j}$, using
\lemref{RemovingIbOrIIb}. The new homotopy
$H_1+H_2+\ldots+\widetilde{H_j}$ is a minimum homotopy which has one less such 
$b$
move. Removing such $b$ moves one by one, we obtain a well-behaved minimum
homotopy.

The second part of the theorem follows from the first part and 
\thmref{ReconstructionStable}.
\end{proof}


It follows that there is a minimum homotopy $H$ such that $E_H$ is constant on each face.
\begin{corollary}   
Let $C$ be a normal curve and let $f_0,f_1,\ldots,f_k$ be the set of faces of
$C$ where $f_0$ is the exterior face. Then, there exists a minimum homotopy $C
\homotopyarrow{H} p_0$ such that $E_H$ is constant on the faces of $C$, i.e., 
if two
points $x,y \in \R^2$ are in the same face, then $E_H(x)=E_H(y)$. Hence, if we
set $E_H(f_i)=E_H(x_i)$ where $x_i \in f_i$, then $E_H(f_0)=0$ and
$$\sigma(C)=\sum_{i=1}^k E_H(f_i) \cdot \textsc{Area}(f_i).$$
\end{corollary}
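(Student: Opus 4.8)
The plan is to deduce the corollary directly from the Main Theorem (\thmref{main}) together with the structural description of the homotopy $H_\Gamma$ induced by a decomposition. First I would invoke \thmref{main} to obtain a decomposition $\Gamma = \{\gamma_1, \ldots, \gamma_m\}$ of $C$ such that the induced homotopy $H_\Gamma$ is minimum. Recall that $H_\Gamma$ is built by contracting each self-overlapping subloop $\gamma_j$ to its root in the linear fashion of \thmref{Self}, processed in an order so that the root of $\gamma_j$ does not appear in any later $\gamma_{j'}$. So it suffices to argue that $E_{H_\Gamma}$ is constant on each face of $C$.

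The key observation is that the contraction of a single self-overlapping curve $\gamma_j$, as constructed in the proof of \thmref{Self}, sweeps each point $x \notin [\gamma_j]$ exactly $|wn(x,\gamma_j)|$ times, and the winding number $wn(\cdot,\gamma_j)$ is constant on each face of $\gamma_j$ — and hence constant on each face of $C$, since every face of $C$ lies inside a single face of the subcurve $\gamma_j$ (the image $[\gamma_j]$ is a subset of $[C]$, so refining $[\gamma_j]$ to $[C]$ only subdivides faces). By additivity of the edge-count function, $E_{H_\Gamma}(x) = \sum_{j=1}^{m} E_{H_j}(x) = \sum_{j=1}^{m} |wn(x,\gamma_j)|$ for all $x \notin [C]$, and each summand is constant on each face of $C$; therefore so is the sum. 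For $x \in [C]$ the value $E_{H_\Gamma}(x)$ does not affect the area integral (a measure-zero set), so we may take $E_{H_\Gamma}$ to be defined by its face values, with $E_{H_\Gamma}(f_0) = 0$ because the outer face has winding number $0$ under every $\gamma_j$.

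Finally, with $H = H_\Gamma$ and $E_H(f_i)$ denoting the common value of $E_H$ on $f_i$, the defining integral $\textsc{Area}(H) = \int_{\R^2} E_H(x)\,dx$ decomposes as a finite sum over faces, $\textsc{Area}(H) = \sum_{i=1}^{k} E_H(f_i)\cdot\textsc{Area}(f_i)$, and since $H$ is minimum this equals $\sigma(C)$, giving the stated formula.

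I expect the only subtle point to be the claim that every face of $C$ is contained in a single face of each subcurve $\gamma_j$; this follows because $[\gamma_j] \subseteq [C]$ implies $\R^2 \setminus [C] \subseteq \R^2 \setminus [\gamma_j]$, so each connected component of $\R^2 \setminus [C]$ lies in one connected component of $\R^2 \setminus [\gamma_j]$, on which $wn(\cdot,\gamma_j)$ is constant. Everything else is bookkeeping with the additivity of $E_H$ already established in the definition of homotopy area and the properties of $H_\Gamma$ recorded in \obsref{kBoundary} and the surrounding discussion.
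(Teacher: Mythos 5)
Your proof is correct and supplies exactly the details the paper elides when it says the corollary ``follows'' from \thmref{main}: take $H = H_\Gamma$ for the decomposition produced by the Main Theorem, use additivity of $E_H$ and the identity $E_{H_j}(x)=|wn(x,\gamma_j)|$ from the proof of \thmref{Self} to get $E_{H_\Gamma}(x)=\sum_j |wn(x,\gamma_j)|$ off $[C]$, and note that each summand is constant on the faces of $C$ because $[\gamma_j]\subseteq[C]$ means the face decomposition of $C$ refines that of each $\gamma_j$. This is the intended argument, and your handling of the measure-zero set $[C]$, the value $E_H(f_0)=0$, and the final face-sum expansion of $\textsc{Area}(H)$ is all sound.
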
 

	\subsection{Algorithm}
	
Let $C\in\mathfrak{C}_{p_0}$ be a normal curve and let $P_C$ be the set of
intersection points of $C$.  Recall that by \thmref{main} there exists a minimum
homotopy for $C$ that consists of contractions of self-overlapping subcurves
to anchor points. We can therefore check each intersection point to see if it
might serve as an anchor point. If $i\in P_C$ is an intersection point of $C$,
then it breaks $C$ into two subcurves that we denote with $C_{i,1}$ and
$C_{i,2}$. The following recursive formula naively checks all possible ways to
break the curves along their intersection points:
	
	$$ \sigma(C) =  \begin{cases} 
	     W(C) & \text{, if } C \text{ is self-overlapping } \\
	     \min_{i \in P_C } \sigma(C_{i,1} ) + \sigma(C_{i,2}) & \text{, 
otherwise} 
	   \end{cases} 
	$$
	
Using this formula we split $C$ at each intersection point, take the best
split and proceed recursively. In the worst case, this recursive algorithm takes
exponential time in $|P_C|$.

	\section{Conclusions}
We have shown that normal curves admit minimum homotopies that are composed of 
contractions of self-overlapping curves.
%
    At this stage, we have a straight-forward exponential algorithm to compute a minimum homotopy.
But, we are optimistic that  
our structural main theorem lays the foundation for developing  a polynomial-time algorithm. 
In fact, undergraduates Parker Evans and Andrea Burns have developed a tool to visualize
minimum-area homotopies (\url{http://www.cs.tulane.edu/~carola/research/code.html}), which has led us to insights on which we can base an
efficient dynamic programming algorithm.

	Another problem to consider is to find a minimum homotopy between any two normal curves
	with the same end point not just between a curve and its endpoint. For some pair
	of curves $C_1$ and $C_2$, the minimum homotopy area between these curves is
	equal to  the minimum homotopy area of the curve $C_1 \circ C_2^{-1}$.  Also, we
	can extend the minimum homotopy problem to curves on other surfaces. We hope to
	address all these problems in future work. 
	
	\section{Acknowledgments}
	This work has been supported by the National Science Foundation grants
	CCF-1301911, CCF-1618469, and CCF-1618605.  
	We thank Erin Chambers, Jeff Erickson, Parker Evans, and Yusu Wang for their
	useful comments and feedback.
{

\bibliographystyle{plain}

\bibliography{biblio}

\begin{thebibliography}{10}

\bibitem{AB27}
James~W.\ Alexander and G.B.\ Briggs.
\newblock On types of knotted curves.
\newblock {\em Annals of Mathematics}, 28(1/4):562--586, 1926--1927.

\bibitem{BL67}
Samuel~J. Blank.
\newblock Extending immersions of the circle.
\newblock {\em Ph.D. Dissertation}, 1967.

\bibitem{CW13}
Erin Chambers and Yusu Wang.
\newblock Measuring similarity between curves on 2-manifolds via homotopy area.
\newblock In {\em Proc.\ 29th Annual ACM Symposium on Computational Geometry},
  pages 425--434, 2013.

\bibitem{CS66}
W.~G. Chinn and N.~E. Steenrood.
\newblock {\em First Concepts of Topology}.
\newblock Mathematical Association of America, 1966.

\bibitem{DG31}
Jesse Douglas.
\newblock Solution to the problem of {P}lateau.
\newblock {\em Transactions of the American Mathematical Society}, 33:263--321,
  1931.

\bibitem{EM09}
David Eppstein and Elena Mumford.
\newblock Self-overlapping curves revisited.
\newblock {\em Symposium on Discrete Algorithms}, pages 160--169, 2009.

\bibitem{JE13}
Jeff Erickson.
\newblock Generic and regular curves.
\newblock University of Illinois at Urbana Champaign, CS 589 Course Notes,
  2013.

\bibitem{FR71}
George Francis.
\newblock Titus homotopies of normal curves.
\newblock In {\em Proceedings of the American Mathematical Society}, volume~30,
  pages 511--518, 1971.

\bibitem{lagrange}
J.~L. Lagrange.
\newblock {\em Oeuvres}.
\newblock 1760.

\bibitem{HL80}
H.B. Lawson.
\newblock {\em Lectures on minimal submanifolds}.
\newblock Publish or Perish, 1980.

\bibitem{MX74}
Morris~L. Marx.
\newblock Extensions of normal immersions of {$S^1$} into $\mathbb{R}^2$.
\newblock {\em Transactions of the American Mathematical Society},
  187:309--326, 1974.

\bibitem{NIE14}
Zipei Nie.
\newblock On the minimum area of null homotopies of curves traced twice.
\newblock arXiv:1412.0101v2, 2014.

\bibitem{plateau}
J.~Plateau.
\newblock Recherches exp\'erimentales et th\'eorique sur les figures
  d'\'equilibre d'une masse liquide sans pesanteur.
\newblock {\em Mem. Acad. Roy. Belgique}, 29, 1849.

\bibitem{RD30}
Tibor Rado.
\newblock On {P}lateau's problem.
\newblock {\em Annals of Mathematics}, 31:457--469, 1930.

\bibitem{SVW92}
Peter~W.\ Shor and Christopher {Van Wyk}.
\newblock Detecting and decomposing self-overlapping curves.
\newblock {\em Computational Geometry: Theory and Applications}, 2(1):31--50,
  1992.

\bibitem{TT61}
Charles~J.\ Titus.
\newblock The combinatorial topology of analytic functions of the boundary of a
  disk.
\newblock {\em Acta Mathematica}, 106(1):45--64, 1961.

\bibitem{WH37}
Hassler Whitney.
\newblock On regular closed curves in the plane.
\newblock {\em Compositio Mathematica 4}, pages 267--284, 1937.

\end{thebibliography}
}

\newpage
\begin{appendix}
\section{Metric Space} 

\begin{theorem}[Metric Space]
Define 
$\mathfrak{C}_{p_0}^+=\{C \in \mathfrak{C}_{p_0} \mid \whit{C}\geq
0\}/\sim$,
where $C_1\sim C_2$ if $[C_1]=[C_2]$. 
Then $(\mathfrak{C}_{p_0}^+,\sigma)$ is a metric space.
\end{theorem}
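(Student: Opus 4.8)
The plan is to verify the three metric-space axioms for $(\mathfrak{C}_{p_0}^+,\sigma)$ in turn: non-negativity together with the identity of indiscernibles, symmetry, and the triangle inequality. Throughout, I interpret $\sigma(C_1,C_2)$ as the minimum homotopy area between representatives, and I must first check that $\sigma$ is well-defined on the quotient $\mathfrak{C}_{p_0}^+/\!\sim$: since $\sigma$ depends only on the swept multiplicities $E_H$, which in turn depend only on the images $[C_1],[C_2]$ and not on the parametrization, reparametrizing a curve leaves $\sigma$ unchanged; this handles well-definedness. I also need closure of $\mathfrak{C}_{p_0}^+$ under the relevant operations — in particular, that $C_1\circ C_2^{-1}$ (used implicitly via the conclusion of \S4's remark) has a well-defined image base point $p_0$, which it does by construction.

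For non-negativity, $\sigma(C_1,C_2)=\inf_H \int_{\R^2} E_H \geq 0$ since $E_H\geq 0$ pointwise. For the identity of indiscernibles, one direction is immediate: if $[C_1]=[C_2]$, the constant homotopy (suitably reparametrized) has zero area, so $\sigma(C_1,C_2)=0$. The converse — that $\sigma(C_1,C_2)=0$ forces $[C_1]=[C_2]$ in the quotient — is where I expect to lean on \lemref{ChambersWang} or a variant: if $[C_1]\neq[C_2]$, then the closed curve $C_1\circ C_2^{-1}$ is not null-homotopic with zero swept area, because some face has nonzero winding number, or more carefully, because any homotopy between $C_1$ and $C_2$ must sweep across the symmetric-difference region of the two images; I would make this precise by noting that $\sigma(C_1,C_2)=\sigma(C_1\circ C_2^{-1})\geq W(C_1\circ C_2^{-1})$ by \lemref{ChambersWang}, and $W(C_1\circ C_2^{-1})=0$ forces all winding numbers to vanish, which for curves with distinct images still need not be zero — so the genuinely careful step is ruling out images that differ but enclose zero net winding. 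I would argue that even then the homotopy must physically pass over the differing arcs, contributing positive $E_H$, hence positive area.

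Symmetry follows by reversing a homotopy: if $C_1\homotopyarrow{H}C_2$ then $H'(s,t):=H(1-s,t)$ gives $C_2\homotopyarrow{H'}C_1$ with $E_{H'}=E_H$, so $\textsc{Area}(H')=\textsc{Area}(H)$ and $\sigma(C_1,C_2)=\sigma(C_2,C_1)$. (Here I must double-check that $H'$ still fixes the basepoint, which it does since $H(s,0)=p_0=H(s,1)$ for all $s$.) The triangle inequality $\sigma(C_1,C_3)\leq \sigma(C_1,C_2)+\sigma(C_2,C_3)$ is the concatenation estimate: given near-optimal homotopies $H_{12}$ and $H_{23}$, the concatenation $H_{12}+H_{23}$ is a homotopy from $C_1$ to $C_3$ with $\textsc{Area}(H_{12}+H_{23})=\textsc{Area}(H_{12})+\textsc{Area}(H_{23})$ by additivity of area established in \S3.1; taking infima gives the inequality.

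The main obstacle is the identity of indiscernibles: showing $\sigma(C_1,C_2)=0 \implies [C_1]=[C_2]$. The clean cases (distinct images with some nonzero winding number) are dispatched by \lemref{ChambersWang}, but I must handle images that differ yet are "winding-number invisible" to each other — e.g., a spur that doubles back. For such configurations I would show directly that any homotopy carrying $C_1$ to $C_2$ must at some intermediate time have its image fail to contain a point of $[C_1]\triangle[C_2]$ and then later cover it (or vice versa), so $E_H$ is strictly positive on a set of positive measure near that differing arc, forcing $\textsc{Area}(H)>0$; making the "positive measure" claim rigorous (a differing arc could a priori have measure zero) will require an argument that a homotopy sweeping an arc sweeps a two-dimensional neighborhood, which I would extract from the definition of $E_H$ and continuity of $H$.
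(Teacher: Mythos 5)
Your proof follows essentially the same outline as the paper's: check well-definedness of $\sigma$ on the quotient via a reparametrization homotopy with zero area, then verify non-negativity, symmetry via $H(1-s,t)$, and the triangle inequality via concatenation and additivity of \textsc{Area}. Those parts are correct and match the paper.

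Where you diverge is in the identity of indiscernibles. The paper simply asserts ``Clearly, $\sigma(C_1,C_2)=0$ if and only if $C_1\sim C_2$'' and proves only the easy direction (same image $\Rightarrow$ zero area, via the reparametrization homotopy). You correctly identify the hard direction as the crux and sketch an argument through \lemref{ChambersWang}, but then worry about two things that are in fact non-issues here, while missing the point that actually resolves the question. First, you worry that differing arcs might be ``winding-number invisible,'' e.g.\ a spur that doubles back. But all curves in $\mathfrak{C}_{p_0}$ are normal, hence piecewise-regular immersions with only transverse crossings; a spur or a retraced arc is simply not allowed, so every arc of the normal curve $C_1\circ C_2^{-1}$ (after the small perturbation the paper allows to make it normal) has distinct left and right faces across which the winding number jumps by $\pm 1$. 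Consequently if $[C_1]\neq[C_2]$, some arc of one curve lies off the other, and the winding number of $C_1\circ C_2^{-1}$ is nonzero on at least one face adjacent to it, which has positive area since faces are open. That gives $W(C_1\circ C_2^{-1})>0$, and \lemref{ChambersWang} finishes the job. Second, you worry that even if every homotopy has positive area, the infimum $\sigma$ could still be zero; again the winding-area argument disposes of this, because it produces a single positive constant bounding \emph{every} homotopy area from below, not just a pointwise-positive statement. In short, the heuristic ``must sweep a two-dimensional neighborhood'' argument you propose to fall back on is both harder to make rigorous and unnecessary; the winding-area lower bound already suffices once you use normality.

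One further gap that neither you nor the paper addresses head-on: the well-definedness argument via $H(s,t)=C_1(s\phi(t)+(1-s)t)$ requires $\phi(0)=0$, $\phi(1)=1$, i.e.\ an orientation-preserving reparametrization. For orientation-reversing ones (so $C_2\simeq C_1^{-1}$) this construction fails to fix the basepoint. This is precisely why the quotient is taken only over $\whit{C}\geq 0$; you flag needing ``closure'' of $\mathfrak{C}_{p_0}^+$ but do not explain that the Whitney-index restriction is what makes the relation $\sim$ compatible with $\sigma$. It would strengthen your write-up to say explicitly why the constraint $\whit{C}\geq 0$ is in the hypothesis and what breaks without it.
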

\begin{proof}
First, we must show that $\sigma$ is well-defined.
If $C_1,C_2 \in
\mathfrak{C}_{p_0}^+$ and if $C_1 \sim C_2$, where $C_1(\phi(t))=C_2(t)$ for 
some
function $\phi$, then we have a homotopy $C_1\homotopyarrow{H}  C_2$ such 
that
$H(s,t)=C_1(s\phi(t)+(1-s)t)$.
For this homotopy, we have $E_H(x)=1$
if $x \in [C_1]$ and $E_H(x)=0$ otherwise. Since the curve has zero measure, we
have $\textsc{Area}(H)=0$, which gives us $\sigma(C_1,C_2)=0$. Similarly, if $C_1 \sim C_2$ 
and $C'_1 \sim C'_2$ then $\sigma(C_1,C'_1)=\sigma(C_2,C'_2)$.  Hence, 
$\sigma$
is well-defined.

To finish this proof, we must show that $\sigma$ satisfies 
the metric space identities (the identity of indiscernibles, symmetry, and subadditivity).
%
Clearly, $\sigma(C_1,C_2)=0$ if and only if $C_1 \sim C_2$. 
%
If $C_1 
\homotopyarrow{H} C_2$ is a homotopy, then $C_2 
\homotopyarrow{H^{-1}} C_1$ is a homotopy with $H^{-1}(s,t)=H(1-s,t)$ and 
$\textsc{Area}(H)=\textsc{Area}(H^{-1})$. Hence, 
$\sigma(C_1,C_2)=\sigma(C_2,C_1)$. 
%
Finally, if $C_1 \homotopyarrow{H_1} C_2$, 
$C_2\homotopyarrow{H_2} C_3$ and $C_1 \homotopyarrow{H_3} C_3$ are 
minimum 
homotopies, then $\sigma(C_1,C_3)=\textsc{Area}(H_3) \leq 
\textsc{Area}(H_1+H_2) =\textsc{Area}(H_1)+\textsc{Area}(H_2)=\sigma(C_1,C_2)+ 
\sigma(C_2,C_3)$. 
Thus, we conclude that $(\mathfrak{C}_{p_0}^+,\sigma)$ is a metric space.
\end{proof}

\section{Examples} 
In this section, we apply our main theorem to calculate a minimum homotopy for
the curves in \subfigref{NotEqual} and \figref{MinHomEx3}. We say that a set of 
vertices $A=\{i_1,\ldots,i_k\}$ is \emph{valid} if there exists a decomposition 
$\Gamma$ whose set of roots corresponds to the intersection points 
$\{p_{i_1},\ldots,p_{i_k}\}$. 

\begin{exmp}

We check whether the curve in \subfigref{NotEqual} is 
self-overlapping or not. It is not self-overlapping. However, notice that
splitting the curve into two at either intersection point $2$ or $3$  creates
two self-overlapping curves. Hence, there are two different ways to decompose
the curve. $\{0,2\}$ corresponds to the first decomposition $\Gamma_1$   and 
$\{0,3\}$ corresponds to the second
decomposition $\Gamma_2$. Let $H_1$ be the 
homotopy obtained from $\Gamma_1$, and let $H_2$ be the other homotopy, see
\figref{ApplyMain}. We compute $\textsc{Area}(H_1)=2\textsc{Area}(f_2 \cup 
f_1)+\textsc{Area}(f_4)$ and
$\textsc{Area}(H_2)=2\textsc{Area}(f_3 \cup f_1)+\textsc{Area}( f_4)$. The 
smallest of them, in this case
$H_2$, is the minimum homotopy and $\sigma(C)=\textsc{Area}(H_2)$. On the 
other
hand, $W(C)=2\textsc{Area}(f_1)+\textsc{Area}(f_4)$. This shows that 
$\sigma(C)>W(C)$.

\begin{figure}[htbp]
\centering
\includegraphics[height=2 in]{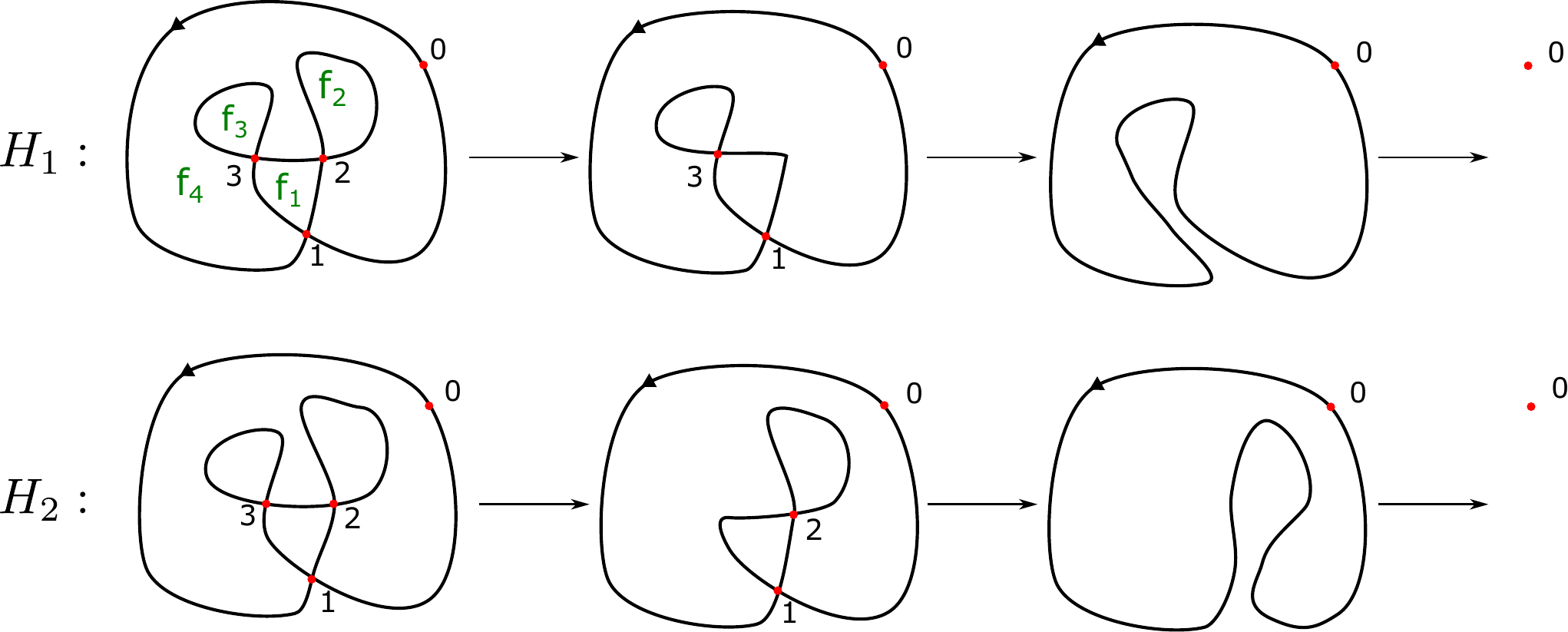}
\caption{Two well-behaved minimum homotopies for the curve in
\subfigref{NotEqual}. Here, $H_2$ sweeps less area since 
$\textsc{Area}(f_3)<\textsc{Area}(f_2)$. }
\label{fig:ApplyMain}
\end{figure} 

\end{exmp}

\begin{exmp}

For the curve in \figref{MinHomEx3}, there are 22 different possible 
decompositions.  We list the valid sets of vertices as follows: $A_1=\{0,3,9\}$, 
$A_2=\{0,3,10\}$, $A_3=\{0,1,2,3,9\}$, $A_4=\{0,1,2,3,10\}$, 
$A_5=\{0,3,4,5,7\}$, $A_6=\{0,3,4,5,8\}$, $A_7=\{0,3,5,7,9\}$ 
$A_8=\{0,3,5,7,10\}$, $A_9=\{0,3,5,8,9\}$, $A_{10}=\{0,3,5,8,10\}$, 
$A_{11}=\{0,1,2,3,4,5,7\}$, $A_{12}=\{0,1,2,3,4,5,8\}$, 
$A_{13}=\{0,1,2,3,5,7,9\}$, $A_{14}=\{0,1,2,3,5,7,10\}$, 
$A_{15}=\{0,1,2,3,5,8,9\}$, $A_{16}=\{0,1,2,3,5,8,10\}$, 
$A_{17}=\{0,3,4,5,6,7,8\}$, $A_{18}=\{0,3,5,6,7,8,9 \}$, 
$A_{19}=\{0,3,5,6,7,8,10\}$, $A_{20}=\{0,1,2,3,4,5,6,7,8\}$, 
$A_{21}=\{0,1,2,3,5,6,7,8,9\}$,  $A_{22}=\{0,1,2,3,5,6,7,8,10\}$.

\begin{figure}[htbp]
\centering
\includegraphics[height=1.4 in]{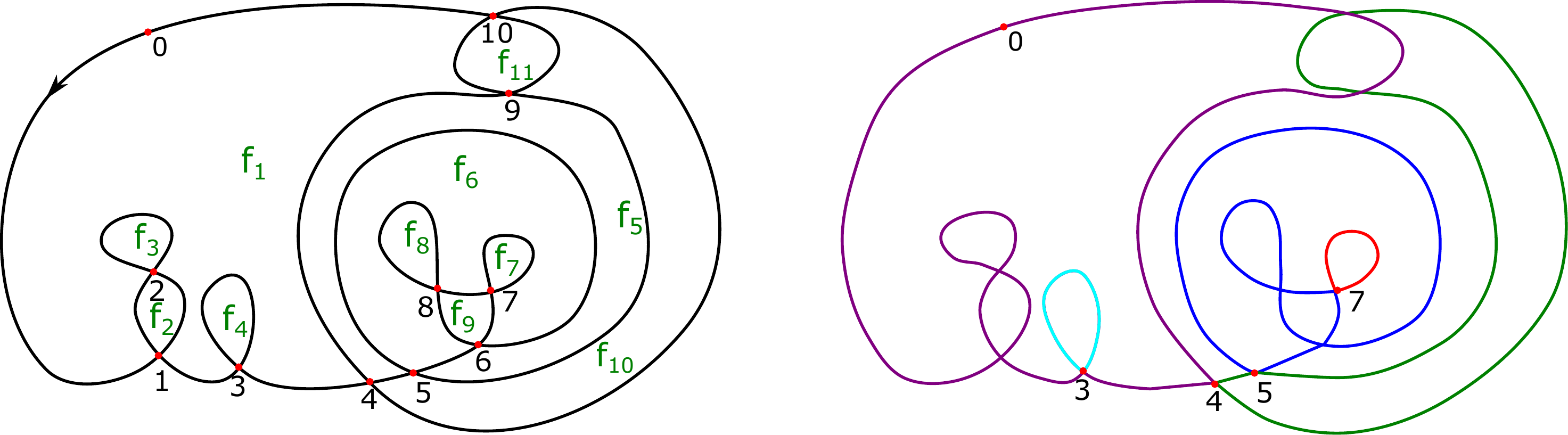}
\caption{The set of anchor indices of the minimum homotopy for this curve is 
$A_H=\{0,3,4,5,7\}$. These vertices decompose the curve into five 
self-overlapping subcurves, the red curve with root $p_7$, the blue curve with 
root $p_5$, the green curve with root $p_4$ and the cyan curve with root $p_3$ 
and the purple curve with root $p_0$.  The homotopy is obtained by contracting 
first the red curve, then the blue curve, then the  green curve,  then the cyan 
curve and finally the purple curve to the corresponding roots $p_7$ , $p_5$, 
$p_4$, $p_3$ and $p_0$ respectively.}
\label{fig:MinHomEx3}
\end{figure}

Among these valid sets, the least area is swept by the homotopy obtained from 
the set $A_5$. Hence, the minimum homotopy area is equal to 
$\sigma(C)=2\textsc{Area}(f_2 \cup f_4 \cup f_7 \cup f_9 \cup f_{11})+ 
\textsc{Area}(f_1 \cup f_6 \cup f_{10})$. Notice that the winding area is equal 
to $W(C)=2\textsc{Area}(f_2 \cup f_4  \cup f_9 )+ \textsc{Area}(f_1 \cup f_6 
\cup f_{10})$ i.e., $\sigma(C)>W(C)$.

\end{exmp}
\section{An Application}

Let $\alpha,\beta$ be two open curves  sharing the same end-points
$\alpha(0)=\beta(0)$ and $\alpha(1)=\beta(1)$. We can concatenate $\alpha$ and
$\beta$ to create a closed curve $C_{\alpha,\beta}$, where
$C_{\alpha,\beta}(t)=\alpha(2t)$ for $t\in[0,\frac{1}{2}]$ and
$C_{\alpha,\beta}(t)=\beta(2-2t)$ for $t\in[\frac{1}{2},1]$. We assume that
$C_{\alpha,\beta}$ is a normal curve, or else we apply a small deformation as
discussed previously.

We define the minimum homotopy area between $\alpha$ and $\beta$ as the minimum
homotopy area of $C_{\alpha,\beta}$, and we denote $\sigma(\alpha,\beta)$. In
other words,  $\sigma(\alpha,\beta)=\sigma(C_{\alpha,\beta})$.

If two curves $\alpha$ and $\beta$ do not share the same endpoints, we create a
closed curve by joining the endpoints via straight lines and define the minimum
homotopy area between  $\alpha$ and $\beta$ as the minimum homotopy area of this
closed curve. See \figref{Concatenate}.

\begin{figure}[htbp]
\centering
\includegraphics[height=1 in]{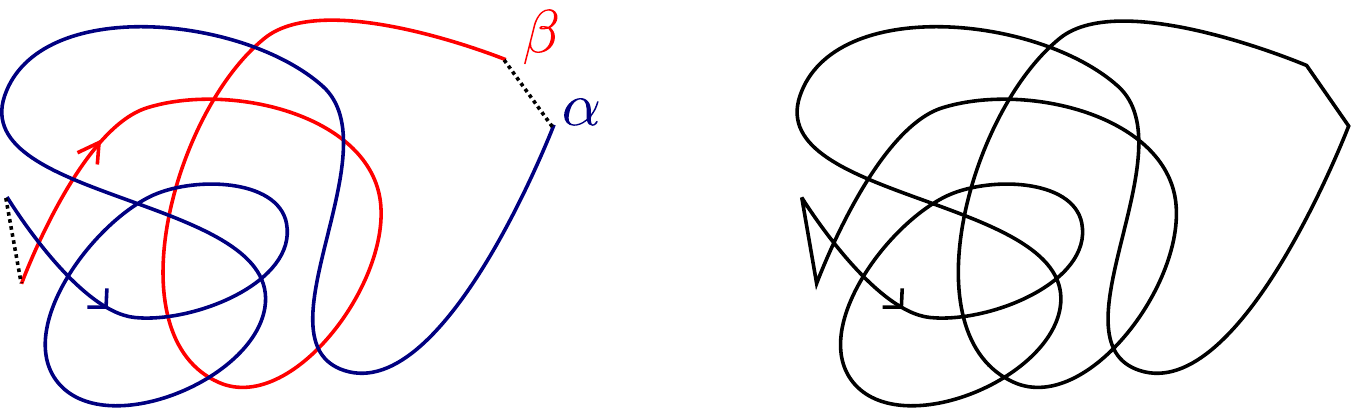}
\caption{On the left, we show two open curves, the blue curve $\alpha$ and 
the red curve $\beta$. On the right, we show the closed curve obtained by 
joining the initial points with straight lines and then concatenating the two 
curves. }
\label{fig:Concatenate}
\end{figure}
Minimum homotopy area can be used to measure the distance between two plane 
graphs, in particular maps created from a set of GPS trajectories.

Let $G_1=(V_1,E_1,w_1)$ and $G_2=(V_2,E_2,w_2)$   be two connected, weighted
plane graphs. We say that a vertex $\widetilde{v} \in V_2$  of $G_2$ is an
associate of $v$ if $$ \| v- \widetilde{v}\|= \underset{w \in V_2}{\min} 
\|v-w\|$$
	
In other words $\widetilde{v}$ is the closest vertex of $G'$ to $v$. We denote
it by $v \sim \widetilde{v}$. For any pair $(u,v) \in V_1 \times V_1 $, $u \neq 
v$,
let $\{p_{\alpha}(u,v)\}_{\alpha \in I}$ be the set of all shortest paths from
$u$ to $v$ and $\{q_{\beta}(u,v)\}_{\beta \in J}$ be the set of all shortest
paths from an associate $\widetilde{u}$ of $u$ to an associate $\widetilde v$ of 
$v$.
Here $p_{\alpha}(u,v)$ is a path $G_1$ and $q_{\beta}(u,v)$ is a path in $G_2$.
Let $C_{\alpha, \beta }(u,v)$ denote the concatenation of $p_\alpha(u,v)$ and
$q_\beta(u,v)$ as defined previously and $$\sigma(u,v)=\underset{\alpha,
\beta}{\min} \text{ } \sigma(C_{\alpha, \beta }(u,v)).$$

And, finally, we define the homotopy area distance between two graphs $G_1$ and $G_2$ as
$$\sigma(G_1,G_2)=\frac{1}{n(n-1)}  {\sum} \sigma(u,vf)$$
where  the summation is taken for each different pair of vertices $u,v  \in V_1$ and $n=|V_1|$.

\end{appendix}

\end{document}